\newcommand{\newsection}[1]{\setcounter{equation}{0} \section{#1}}
\newcommand{\bea}{\begin{eqnarray}}
\newcommand{\eea}{\end{eqnarray}}
\newcommand{\clb}{\mathcal{B}}
\newcommand{\cle}{\mathcal{E}}
\newcommand{\clf}{\mathcal{F}}
\newcommand{\clg}{\mathcal{G}}
\newcommand{\clh}{\mathcal{H}}
\newcommand{\clk}{\mathcal{K}}
\newcommand{\clm}{\mathcal{M}}
\newcommand{\cls}{\mathcal{S}}
\newcommand{\D}{\mathbb{D}}
\newcommand{\C}{\mathbb{C}}
\newcommand{\z}{\bm{z}}
\newcommand{\w}{\bm{w}}
\newcommand{\sfw}{\mathsf{w}}
\def \qed {\hfill \vrule height6pt width 6pt depth 0pt}
\def\textmatrix#1&#2\\#3&#4\\{\bigl({#1 \atop #3}\ {#2 \atop #4}\bigr)}
\def\dispmatrix#1&#2\\#3&#4\\{\left({#1 \atop #3}\ {#2 \atop #4}\right)}
\newcommand{\be}{\begin{equation}}
\newcommand{\ee}{\end{equation}}
\newcommand{\ben}{\begin{eqnarray*}}
\newcommand{\een}{\end{eqnarray*}}
\newcommand{\bi}{\begin{itemize}}
\newcommand{\ei}{\end{itemize}}
\newcommand{\x}{\mathbf x}
\newcommand{\fu}{\mathfrak u}
\newcommand{\fv}{\mathfrak v}
\newcommand{\ft}{\mathfrak t}
\newtheorem{Theorem}{\sc Theorem}[section]
\newtheorem{Lemma}[Theorem]{\sc Lemma}
\newtheorem{Proposition}[Theorem]{\sc Proposition}
\newtheorem{Corollary}[Theorem]{\sc Corollary}
\newtheorem{Definition}[Theorem]{\sc Definition}
\newtheorem{Example}[Theorem]{\sc Example}
\newtheorem{Remark}[Theorem]{\sc Remark}
\newtheorem{Note}[Theorem]{\sc Note}
\newtheorem{Question}{\sc Question}
\newtheorem{ass}[Theorem]{\sc Assumption}
\newcommand{\bt}{\begin{Theorem}}
\def\beginlem{\begin{Lemma}}
\def\beginprop{\begin{Proposition}}
\def\begincor{\begin{Corollary}}
\def\begindef{\begin{Definition}}
\def\beginexamp{\begin{Example}}
\def\beginrem{\begin{Remark}}
\def\beginq{\begin{Question}}
\def\beginass{\begin{ass}}
\def\beginnote{\begin{Note}}
\newcommand{\et}{\end{Theorem}}
\def\endlem{\end{Lemma}}
\def\endprop{\end{Proposition}}
\def\endcor{\end{Corollary}}
\def\enddef{\end{Definition}}
\def\endexamp{\end{Example}}
\def\endrem{\end{Remark}}
\def\endq{\end{Question}}
\def\endass{\end{ass}}
\def\endnote{\end{Note}}
\begin{document}

\title[de Branges-Rovnyak spaces which are complete Nevanlinna-Pick spaces]{de Branges-Rovnyak spaces which are complete Nevanlinna-Pick spaces}

\author[H. Ahmed]{Hamidul Ahmed}
\address{Department of Mathematics, Indian Institute of Technology Bombay, Powai, Mumbai, 400076, India}
\email{hamidulahmedslk@gmail.com, 22D0785@iitb.ac.in}

\author[B.K. Das]{B. Krishna Das}
\address{Department of Mathematics, Indian Institute of Technology Bombay, Powai, Mumbai, 400076, India}
\email{bata436@gmail.com, dasb@math.iitb.ac.in}

\author[S. Panja]{Samir Panja}
\address{Statistics and Mathematics Unit, Indian Statistical Institute, 8th Mile, Mysore Road, Bangalore, 560059, India}
\email{samirpanja\textunderscore pd@isibang.ac.in, panjasamir2020@gmail.com}

\subjclass[2020]{30H15, 30H45, 47B32, 30H10, 30H20, 30H05.} 
\keywords{Complete Nevanlinna-Pick space, de Branges-Rovnyak space, Hardy space, Bergman space.}

\begin{abstract}
We consider de Branges-Rovnyak spaces of a considerably large class of reproducing kernel Hilbert spaces and find a characterization for them to be complete Nevanlinna-Pick spaces. This extends earlier characterizations obtained for the Hardy space over the unit disc (\cite{Chu}) as well as for the Drury-Arveson space over the unit ball (\cite{Jesse}). Our characterization takes a complete form for the particular cases of the Hardy space over the polydisc and the Bergman space over the disc. We show that a non-trivial de Branges-Rovnyak space, associated to a contractive multiplier, of the Hardy space over the bidisc or the Bergman space over the unit disc is a complete Nevanlinna-Pick space if and only if it is isometrically isomorphic to the Hardy space over the unit disc as reproducing kernel Hilbert spaces. On the contrary, it is shown that non-trivial de Branges-Rovnyak spaces of the Hardy space over the $n$-disc with $n\ge 3$ are never complete Nevanlinna-Pick spaces.

\end{abstract}

\maketitle

\newsection{Introduction}

Let $\Omega$ be a non-empty set, and let $K:\Omega \times \Omega \to \C$ be a \emph{kernel} on $\Omega$ (written as $K\succeq 0$), that is, 
for any finite set $\{x_1,\ldots,x_n\}\subset \Omega$, the matrix  \[\begin{bmatrix}
K(x_i,x_j)
\end{bmatrix}_{i,j=1}^n\] is positive semi-definite. We say $K$ is a \emph{holomorphic kernel} if $\Omega$ is a domain and $K$ is holomorphic in one variable and conjugate holomorphic in the other variable.   
The reproducing kernel Hilbert space (RKHS) corresponding to $K$, denoted by $\clh(K)$, is a Hilbert function space on $\Omega$ canonically associated to $K$ (see \cite{PaulRaghu}). For kernels $K$ and $\widetilde{K}$ on $\Omega$, we say $\clh(K)$ and $\clh(\widetilde{K})$ are \emph{isometrically isomorphic as reproducing kernel Hilbert spaces} if there exist a bijective map $F:\Omega\to \Omega$ and non-vanishing function $\lambda:\Omega\to \mathbb C$ such that 
\[
K(x,y)=\lambda(x)\widetilde{K}(F(x),F(y))\overline{\lambda(y)}\quad (x,y\in\Omega).
\]
A function $\varphi:\Omega \to \C$ is a \emph{multiplier} on $\clh(K)$ if $\varphi f\in \clh(K)$ for all $f\in \clh(K)$. The set of all multipliers on $\clh(K)$ is denoted by $\clm(\clh(K))$. By an application of closed graph theorem, for each $\varphi\in\clm(\clh(K))$ the multiplication operator 
\[M_\varphi:\clh(K)\to \clh(K),\ f\mapsto \varphi f,
\] 
is a bounded operator on $\clh(K)$. A multiplier $\varphi\in \clm(\clh(K))$ is contractive if $\|M_{\varphi}\|\le 1$, equivalently if $(1-\varphi(z)\overline{\varphi(w)})K(z,w)$ is a kernel on $\Omega$. 
The collection of all contractive multipliers on $\clh(K)$ is denoted by $\clm_1(\clh(K))$.
By means of these notations, the Nevanlinna-Pick interpolation problem asks, given  a finite set $\{x_1, \ldots, x_k\}\subset \Omega$ and $w_1\ldots, w_k \in \C$, whether there exists a multiplier $\varphi$ in $\clm_1(\clh(K))$ such that
\[\varphi(x_i)=w_i \ (i=1,\ldots,k).\]
The contractivity requirement of $M_{\varphi}$ necessitates the matrix, known as Pick matrix,
\[\begin{bmatrix}
(1-w_i \overline{w_j})K(x_i, x_j)
\end{bmatrix}_{i,j=1}^{k}\]
being positive semi-definite. Therefore, the positive semi-definiteness of the Pick matrix is a necessary condition to solve the interpolation problem (see \cite[Theorem 5.2]{AglerM}). If the necessary condition is also sufficient then we say $K$ is a Nevanlinna-Pick kernel. The pioneer work of Pick (\cite{Pick}) shows that the Szeg\" o kernel on the unit disc $\D$, defined by 
\[
\cls_1(z, w):=\frac{1}{1-z \Bar{w}}, \quad (z,w\in\D)
\]
is a Nevanlinna-Pick kernel. Unaware of Pick's work, Nevanlinna also solved and parameterized all the solutions of an interpolation problem in this setting (\cite{Nevanlinna1}). In fact, the work of Pick and Nevanlinna is the beginning of this interpolation problem. In $1967$, the influential work of Sarason (\cite{Sara}) established a connection between the Nevanlinna-Pick interpolation problem and operator theory, which paved a way for further generalizations for various other domains. There are numerous generalizations starting from ~\cite{AM} for multiply connected domains, ~\cite{AglerM, BallT} for the bidisc, \cite{AglerY, BhattSau} for the symmetrized bidisc to \cite{JurryK} for distinguished varieties of the bidisc.

The matrix-valued Nevanlinna-Pick interpolation problem asks, given  a finite set of points $\{x_1, \ldots, x_k\}\subset \Omega$ and $W_1,\ldots, W_k \in M_{m,n}$, the set of all $m\times n$ matrices, whether there exists a multiplier $\varphi$ in 
\[\clm(\clh(K)\otimes \C^n,\clh(K)\otimes \C^m ):=\{f:\Omega \to M_{m,n}:f g\in \clh(K)\otimes \C^m \text{ for all }g\in\clh(K)\otimes \C^n\}\] such that
\[\varphi(x_i)=W_i \ (i=1,\ldots,k)\ \text{ and }\quad \|M_\varphi\|\leq 1.\]
Once again a necessary condition to solve the above matrix-valued interpolation problem is that the block matrix 
\[\begin{bmatrix}
(I-W_i W^*_j)K(x_i,x_j)
\end{bmatrix}_{i,j=1}^{k}\]
is positive semi-definite (see \cite[Theorem 5.8]{AglerM}). We say $K$ has the $M_{m,n}$-Nevanlinna-Pick property if the necessary condition is also sufficient. A kernel $K$ is a \emph{complete Nevanlinna-Pick kernel}, hereby abbreviated as CNP kernel, if $K$ has the $M_{m,n}$-Nevanlinna-Pick property for all $m,n\in \mathbb N$. If $K$ is a CNP kernel then the corresponding reproducing kernel Hilbert space $\clh(K)$ is called a CNP space. Prototype examples of CNP spaces are the Hardy space $H^2(\D)$ over the unit disc with Szeg\"o kernel $\cls_1$ and the Drury-Arveson space $H^2_n(\mathbb B^n)$ over the unit ball $\mathbb B^n$ in $\mathbb C^n$ with kernel
\begin{align}\label{Drury-Arveson_Kernel}
    \clk(\z,\w)=\frac{1}{1-\langle \z,\w\rangle}_{\C^n}\quad \big(\z=(z_1,\ldots,z_n), \w=(w_1,\ldots, w_n)\in \mathbb B^n\big).
\end{align} 
CNP spaces are of constant interest as they behave nicely compared to general reproducing kernel Hilbert spaces in the sense that, many important properties of the Hardy space or the Drury-Arveson space hold for CNP spaces.
To name a few, any function in a CNP space can be decomposed into sub-inner and free-outer factors which is a generalization of the classical inner-outer factorization of functions in Hardy space (\cite{JuryMartin, AHMR}); Beurling-Lax-Halmos type theorem holds in a CNP space (\cite{McTr00}); 
the Gleason problem is solvable for the Multiplier algebra of a CNP space (\cite{Hartz_1, ABJK}); last but not the least, every CNP space satisfies the column-row property (with constant $1$) (\cite{Hartz}).
As a matter of course, it is important to characterize reproducing kernel Hilbert spaces which are CNP spaces.

A characterization of CNP kernels is first studied by McCullough (\cite{Mccull92}) and Quiggin (\cite{Quiggin}). However, for the present purpose we use the following characterization by Agler and McCarthy (\cite{AglerM}).
We say a kernel $K$ on $\Omega$ is \emph{normalized} at some point $\sfw\in \Omega$ if $K(x,\sfw)=1$ for all $x\in \Omega$. The hypothesis of the kernel being normalized in the following characterization, as well as in all other results where it appears in this article, is only for convenience and does not put any restriction.  
 For Hilbert spaces $\cle$ and $\clf$, we denote by $\clb(\clf, \cle)$ the space of all bounded linear operators from $\clf$ to $\cle$. We simply write $\clb(\cle)$ to denote the space $\clb(\cle,\cle)$. The open unit ball in $\clb(\clf, \cle)$ is denoted by $\clb_1(\clf, \cle)$.

\begin{Theorem}[McCullough--Quiggin, Agler--McCarthy]\label{MQAM}
Let $K$ be a non-vanishing kernel on $\Omega$ that is normalized at some point $\sfw\in \Omega$. Then
$K$ is a CNP kernel if and only if there exist an auxiliary Hilbert space $\cle$ and a function $u:\Omega \to \clb_1(\cle, \C)$ such that $u(\sfw)=0$ and 
    \begin{align}\label{CNP_Kernel_form}
        K(x,y)=\frac{1}{1-u(x)u(y)^*}\quad (x,y\in \Omega).
    \end{align}
\end{Theorem}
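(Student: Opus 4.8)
The plan is to prove the two implications separately. The ``if'' direction will be a soft consequence of the universality of the Drury--Arveson kernel, while the ``only if'' direction---the substantive one---will be reduced to the single assertion that the kernel $1-1/K$ is positive semidefinite.

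For the ``if'' direction, suppose $K(x,y)=(1-u(x)u(y)^*)^{-1}$ with $u:\Omega\to\clb_1(\cle,\C)$ and $u(\sfw)=0$. Writing $\eta_x:=u(x)^*1\in\cle$ one has $\|\eta_x\|=\|u(x)\|<1$, $\eta_\sfw=0$ and $u(x)u(y)^*=\langle\eta_y,\eta_x\rangle_\cle$; fixing an orthonormal basis of the closed linear span of $\{\eta_x:x\in\Omega\}$ and conjugating coordinates, we may replace $\eta_x$ by $\xi_x$ with $\langle\xi_x,\xi_y\rangle_\cle=\langle\eta_y,\eta_x\rangle_\cle$, so that
\[
K(x,y)=\frac{1}{1-\langle\xi_x,\xi_y\rangle_\cle},\qquad \|\xi_x\|<1,\quad \xi_\sfw=0 .
\]
Thus $K$ is the pull-back, along $x\mapsto\xi_x$, of the kernel $k_\cle(z,w)=(1-\langle z,w\rangle_\cle)^{-1}$ on the open unit ball of $\cle$. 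This kernel $k_\cle$ is a CNP kernel: for $\dim\cle<\infty$ it is the Drury--Arveson kernel $\clk$ of \eqref{Drury-Arveson_Kernel}, and the general case reduces to that one because any finite matrix Pick problem for $k_\cle$ uses only finitely many nodes, which span a finite-dimensional subspace, and the Pick matrix depends only on the inner products among those nodes; a solution then extends to a contractive multiplier of $\clh(k_\cle)$ since $k_\cle$ is a Schur multiple of its truncation to that subspace, a routine computation. Finally, the CNP property passes to pull-backs along an arbitrary map of the base set---a Pick problem for the pull-back is one for $k_\cle$ (with repeated nodes forcing the corresponding targets to coincide), and a solution, composed with the map, solves the original problem, since precomposition with the map is a coisometry of reproducing kernel Hilbert spaces intertwining the multiplier algebras. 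Hence $K$ is a CNP kernel.

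For the ``only if'' direction, assume $K$ is a non-vanishing CNP kernel normalized at $\sfw$. Positivity of the $2\times 2$ submatrix of $K$ indexed by $\{\sfw,x\}$ forces $K(x,x)\ge 1$, hence $0\le 1-1/K(x,x)<1$ for every $x$, and $1-1/K(x,\sfw)=0$. Put $b:=1-1/K$, equivalently $K=(1-b)^{-1}$. The entire content is to show that $b\succeq 0$ as a kernel on $\Omega$. Granting this, a Kolmogorov decomposition gives a Hilbert space $\cle$ and vectors $\eta_x\in\cle$ with $b(x,y)=\langle\eta_y,\eta_x\rangle_\cle$; then $\|\eta_x\|^2=b(x,x)<1$ and $\eta_\sfw=0$ (as $b(\sfw,\sfw)=0$), so $u(x):=\langle\,\cdot\,,\eta_x\rangle_\cle\in\clb_1(\cle,\C)$ satisfies $u(\sfw)=0$ and $u(x)u(y)^*=b(x,y)$, whence $K=(1-uu^*)^{-1}$, which is \eqref{CNP_Kernel_form}.

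It remains to prove $b\succeq 0$, and this is where I expect the main obstacle. Positive semidefiniteness is tested on finite subsets, and the restriction of $K$ to a finite set $F\ni\sfw$ is again a CNP kernel---any matrix Pick problem with nodes in $F$ is a Pick problem on all of $\Omega$, solvable by hypothesis, and the resulting multiplier restricts to a contractive multiplier of $\clh(K|_F)$ because $\clh(K|_F)$ is the space of restrictions of functions in $\clh(K)$ with the quotient norm---so it suffices to treat $\Omega$ finite. The crux is then to manufacture a contractive multiplier $\Phi:\Omega\to\clb(\cle,\C)$, for a suitable auxiliary $\cle$, whose de Branges--Rovnyak space---which has reproducing kernel $(1-\Phi(x)\Phi(y)^*)K(x,y)$---equals the one-dimensional space of constants, whose kernel is identically $1$; rearranging $(1-\Phi(x)\Phi(y)^*)K(x,y)=1$ gives $\Phi(x)\Phi(y)^*=1-1/K(x,y)=b(x,y)$, and the left-hand side is manifestly a positive kernel, so $b\succeq 0$. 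One clean way to produce such a $\Phi$ is the Beurling--Lax--Halmos theorem for CNP spaces (\cite{McTr00}): the subspace $\{f\in\clh(K):f(\sfw)=0\}$ is invariant under the multiplier algebra, since $(\psi f)(\sfw)=\psi(\sfw)f(\sfw)$, hence is the range of a partially isometric multiplier $M_\Phi$ on $\clh(K)\otimes\cle$ for some $\cle$; its orthocomplement is the span of the constant function $1=K(\cdot,\sfw)$, which is precisely the de Branges--Rovnyak space of $\Phi$ and carries the constant kernel because $\|1\|^2=K(\sfw,\sfw)=1$. Alternatively one argues directly on each finite $F$ by the extremal/inductive argument of McCullough and Quiggin---peeling off one node at a time, using at each stage that the constants embed isometrically ($K-1\succeq 0$, since $1\in\clh(K)$ has unit norm) together with the complete Pick property---which is more self-contained but also more technical; in either case this production of $\Phi$ (equivalently, the kernel inequality for $b$) is the heart of the theorem, while the reductions above and the Kolmogorov step are routine. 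Reassembling over all $F$ yields $b\succeq 0$ on $\Omega$, completing the proof of Theorem~\ref{MQAM}.
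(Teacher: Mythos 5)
The paper itself gives no proof of Theorem~\ref{MQAM}: it is quoted as a classical result of McCullough, Quiggin and Agler--McCarthy, so there is nothing internal to compare your argument with, and it must be judged on its own. Your ``if'' direction is essentially correct: pulling $K$ back to the kernel $k_\cle(z,w)=(1-\langle z,w\rangle)^{-1}$ on the ball of $\cle$, reducing to finite-dimensional $\cle$ via the Schur-multiple observation, and noting that the complete Pick property passes to restrictions/pull-backs are all sound, granted the background fact (taken for granted in this paper as well) that the Drury--Arveson kernel is a CNP kernel. The preliminary steps of the ``only if'' direction (the $2\times 2$ argument giving $K(x,x)\ge 1$, the reduction to finite subsets, and the Kolmogorov factorization once $1-1/K\succeq 0$ is known) are likewise fine.

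The genuine gap is exactly at what you call the heart: the positivity of $b=1-1/K$ is never proved. Your primary device is the Beurling--Lax--Halmos theorem of McCullough--Trent applied to the invariant subspace $\{f\in\clh(K):f(\sfw)=0\}$. But that theorem is downstream of Theorem~\ref{MQAM}: its proof (like that of Theorem~\ref{Theo_BTV}) starts from the representation $K(x,y)=\bigl(1-u(x)u(y)^*\bigr)^{-1}$, i.e., from precisely the structural form you are trying to derive, and the present paper explicitly lists the Beurling--Lax--Halmos property as a known \emph{consequence} of being a CNP space. Invoking it here is therefore circular unless you produce the partially isometric multiplier $M_\Phi$ directly from the interpolation-theoretic definition of the complete Pick property, which you do not. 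Your fallback --- ``the extremal/inductive argument of McCullough and Quiggin, peeling off one node at a time'' --- is a pointer to the original papers, not an argument: the inductive step, in which the complete Pick property applied to carefully chosen (matrix-valued) data forces the Schur-complement inequality that yields $b|_F\succeq 0$, is the entire technical content of the hard direction and is omitted. As written, the proposal reduces the theorem to the literature it is meant to reprove rather than establishing it.
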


In the above characterization, $K$ is a holomorphic kernel if and only if $u$ is a holomorphic function on $\Omega$.
It is well-known (\cite{AglerM}) that $K$
is a kernel on $\Omega$ if and only if there exist an auxiliary Hilbert space $\cle$ and a function $g:\Omega \to \clb(\cle, \C)$ such that $K(x,y)=g(x)g(y)^*$ for all $x,y\in \Omega$. Therefore, the above theorem gives the following equivalent characterization:
\begin{align}
    K \text{ is a non-vanishing and normalized CNP kernel if and only if } 1-\frac{1}{K}\succeq 0.
    \end{align}
Recently, Chu (\cite{Chu}) considered \textit{de Branges-Rovnyak spaces} (\cite{BranRov66, BranRov68}) of the Hardy space $H^2(\D)$ and found a characterization for them to be CNP spaces. A similar characterization in the setting of Drury-Arveson space was also obtained by Sautel in \cite{Jesse}. 
The aim of this article is to advance the study carried out in ~\cite{Chu} and ~\cite{Jesse}.

The notion of \textit{de Branges-Rovnyak subspaces} of $H^2(\D)$ was first introduced by de Branges and Rovnyak (\cite{BranRov66} and \cite{BranRov68}) in the context of model theory for a large class of contractions. Subsequently, many applications on different topics in complex analysis and operator theory were found. For more details on these, we refer the reader to \cite{BranRov66} and \cite{Sarason}. For a kernel $K$ on $\Omega$ and $\varphi\in \clm_1(\clh(K))$, the de Branges-Rovnyak space associated to $\varphi$ is a reproducing kernel Hilbert space on $\Omega$ with kernel 
\begin{align}\label{Mul_equivalent}
    K^\varphi(x,y)=(1-\varphi(x) \overline{\varphi(y)})K(x,y)\quad (x,y\in \Omega).
\end{align}
Throughout the article, for $\varphi\in\clm_1(\clh(K))$, the notations $K^{\varphi}$ and $\clh(K^{\varphi})$ are reserved for the de Branges-Rovnyak kernel and the de Branges-Rovnyak subspace of $\clh(K)$ associated to $\varphi$, respectively.
With these notations, Chu in ~\cite{Chu} proved the following remarkable and very neat characterization result for de Branges-Rovnyak subspaces of $H^2(\D)$ to be CNP spaces. Note that the multiplier algebra $\clm(H^2(\D))$ is isometrically isomorphic to $H^{\infty}(\D)$, the space of all bounded holomorphic functions on $\D$. We denote by $H_1^{\infty}(\D)$ the closed unit ball in $H^{\infty}(\D)$. 
 The following theorem is stated with the normalization that $\varphi(0)=0$. The only reason behind such a normalization is that the theorem takes much simpler form with it. The present reformulation is equivalent to \cite[Theorem 1.1]{Chu}, for more details the reader is referred to \cite[page 27]{Jesse}. 
\begin{Theorem}[Chu]\label{CChu}
Let $\varphi$ be a non-constant function in $H_1^\infty(\D)$ with $\varphi(0)=0$. Then the de Branges-Rovnyak space $\clh(\cls_1^\varphi)$ is a CNP space if and only if there exists a $\psi\in H_1^\infty(\D)$ such that 
\[\varphi(z)=z\psi(\varphi(z))\quad (z\in \D).\]
\end{Theorem}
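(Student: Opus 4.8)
The plan is to convert the complete Nevanlinna--Pick question for $\clh(\cls_1^\varphi)$ into a single kernel-positivity statement via Theorem~\ref{MQAM}, and then to solve that positivity statement by a lurking-isometry (transfer-function) argument that produces precisely a multiplier of the asserted form.

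\emph{Reduction.} Since $\varphi$ is non-constant with $\varphi(0)=0$, the Schwarz lemma gives $|\varphi(z)|\le|z|<1$ on $\D$, so $1-\varphi(z)\overline{\varphi(w)}\neq 0$ on $\D\times\D$; hence $\cls_1^\varphi(z,w)=\bigl(1-\varphi(z)\overline{\varphi(w)}\bigr)/(1-z\overline{w})$ is non-vanishing, and $\cls_1^\varphi(z,0)=1$, so it is normalized at $0$. By the reformulation $1-\tfrac1K\succeq 0$ of Theorem~\ref{MQAM}, $\clh(\cls_1^\varphi)$ is a CNP space if and only if
\[
L(z,w):=1-\frac{1}{\cls_1^\varphi(z,w)}=\frac{z\overline{w}-\varphi(z)\overline{\varphi(w)}}{1-\varphi(z)\overline{\varphi(w)}}\succeq 0\qquad\text{on }\D .
\]
So everything reduces to showing that $L\succeq 0$ is equivalent to the existence of $\psi\in H_1^\infty(\D)$ with $\varphi(z)=z\psi(\varphi(z))$.

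\emph{The easy direction.} If $\varphi(z)=z\psi(\varphi(z))$ with $\psi\in H_1^\infty(\D)$, then $z\overline{w}-\varphi(z)\overline{\varphi(w)}=z\overline{w}\bigl(1-\psi(\varphi(z))\overline{\psi(\varphi(w))}\bigr)$, so
\[
L(z,w)=z\overline{w}\cdot\frac{1-\psi(\varphi(z))\overline{\psi(\varphi(w))}}{1-\varphi(z)\overline{\varphi(w)}} .
\]
Here $z\overline{w}$ is a rank-one kernel, while the second factor is the pullback along $\varphi:\D\to\D$ of the kernel $\bigl(1-\psi(u)\overline{\psi(v)}\bigr)\cls_1(u,v)$, which is a kernel because $\psi\in H_1^\infty(\D)=\clm_1(H^2(\D))$. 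A pullback of a kernel is a kernel and a Schur product of kernels is a kernel, so $L\succeq 0$.

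\emph{The main direction.} Assume $L\succeq 0$ and write $L(z,w)=\langle h(z),h(w)\rangle_\cle$ for some Hilbert space $\cle$ and some $h:\D\to\cle$ (no holomorphy of $h$ is needed). Clearing the denominator in $L(z,w)\bigl(1-\varphi(z)\overline{\varphi(w)}\bigr)=z\overline{w}-\varphi(z)\overline{\varphi(w)}$ and regrouping gives
\[
\langle h(z),h(w)\rangle_\cle+\varphi(z)\overline{\varphi(w)}=z\overline{w}+\varphi(z)\overline{\varphi(w)}\,\langle h(z),h(w)\rangle_\cle ,
\]
i.e. the vectors $h(z)\oplus\varphi(z)\in\cle\oplus\C$ and $z\oplus\varphi(z)h(z)\in\C\oplus\cle$ have equal Gram matrices as $z$ ranges over $\D$. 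I would then run the standard lurking-isometry argument: the map $z\oplus\varphi(z)h(z)\mapsto h(z)\oplus\varphi(z)$ extends to an isometry from the closed span of $\{z\oplus\varphi(z)h(z):z\in\D\}$ in $\C\oplus\cle$ into $\cle\oplus\C$, and hence (zero extension on the orthogonal complement, then a coordinate flip) to a contraction $\left(\begin{smallmatrix}B&A\\D&C\end{smallmatrix}\right):\cle\oplus\C\to\cle\oplus\C$. Viewed as a contractive colligation with state space $\cle$ and one-dimensional input and output, its transfer function $\psi(\lambda):=C+\lambda D(I_\cle-\lambda B)^{-1}A$ is well defined on $\D$ (since $\|B\|\le 1$) and lies in $H_1^\infty(\D)$. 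Finally, reading off the two coordinates of the isometry identity $V\bigl(z\oplus\varphi(z)h(z)\bigr)=h(z)\oplus\varphi(z)$, where $V=\left(\begin{smallmatrix}A&B\\C&D\end{smallmatrix}\right)$, and eliminating $h(z)$, yields precisely $\varphi(z)=z\psi(\varphi(z))$.

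\emph{Main obstacle.} The reduction and the easy direction are routine; the substance is the main direction, and within it the realization step requires care: one must set up the lurking isometry so that the resulting \emph{contractive colligation} has $\varphi$, not $z$, in the role of the internal variable $\lambda$, so that its transfer function evaluated along $\varphi$ reproduces $\varphi(z)/z$. Keeping track of which copy of $\C$ and which copy of $\cle$ serve as input, output, and state is the delicate bookkeeping; apart from that, only the standard de Branges--Rovnyak / Schur-class realization machinery is used.
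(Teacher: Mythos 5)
Your argument is correct, but it takes a genuinely different route from the one this paper uses. The paper never proves Theorem~\ref{CChu} by a bare-hands realization: it deduces it (via Theorem~\ref{CNP_gene_kernel}, i.e.\ Theorem~\ref{NCNP} with $f=0$, $g(z)=z$, $u(z)=z$) from two imported black boxes applied over the auxiliary CNP kernel $\frac{1}{1-\varphi(z)\overline{\varphi(w)}}$: Leech's factorization theorem (Theorem~\ref{Leech_Theo}) converts the positivity of $1-\frac{1}{\cls_1^\varphi}=\bigl(z\overline{w}-\varphi(z)\overline{\varphi(w)}\bigr)\frac{1}{1-\varphi(z)\overline{\varphi(w)}}$ into the existence of a contractive multiplier $\Theta$ with $\varphi(z)=z\,\Theta(z)$, and Ball--Trent--Vinnikov (Theorem~\ref{Theo_BTV}) then forces $\Theta=\psi\circ\varphi$ with $\psi\in H^\infty_1(\D)$. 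Your proof reaches the same reduction $1-\frac{1}{\cls_1^\varphi}\succeq 0$ via Theorem~\ref{MQAM}, but then replaces Leech and BTV by a single lurking-isometry/transfer-function construction: you factor the positive kernel, read off equal Gram matrices, build a contractive colligation with state space $\cle$ and scalar input/output, and obtain $\psi$ directly as its transfer function, with the easy direction handled by a pullback-and-Schur-product computation (which in the paper is subsumed in the ``iff'' of Leech). In effect you inline, in this scalar special case, exactly the arguments that prove Leech and BTV, so your proof is more elementary and self-contained and makes the provenance of $\psi$ explicit; the paper's route buys modularity and generality (operator-valued $f,g$, arbitrary base set, and non-CNP base kernels as in Theorem~\ref{NCNP1}), which is what it needs for the polydisc and Bergman applications. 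The only steps you should state explicitly rather than gesture at are (i) that the zero extension of the densely defined isometry is a contraction (that is all you need), and (ii) the standard fact that the transfer function $\psi(\lambda)=C+\lambda D(I_\cle-\lambda B)^{-1}A$ of a contractive colligation lies in $H^\infty_1(\D)$; with those two sentences added, the bookkeeping you flag as the main obstacle is exactly right and the elimination of $h(z)$ does yield $\varphi(z)=z\psi(\varphi(z))$.
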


It is apparent that the collection of bounded holomorphic functions $\varphi\in H^{\infty}_1(\D)$ for which $\clh(\cls_1^\varphi)$ is a CNP space is significantly smaller as, at the very least, $\varphi$ needs to be an injective function. Indeed, for different $z_1$ and $ z_2$ in $\D$ if $\varphi(z_1)=\varphi(z_2)$, then it follows from the identity $\varphi(z)=z\psi(\varphi(z))$ that $\varphi(z_1)=\varphi(z_2)=0$ and $\psi(0)=0$. Since $\psi(0)=0$, the functions $\varphi(z)$ and $z\psi(\varphi(z))$ has a zero at the origin with different multiplicities, which is a contradiction. Just to demonstrate how useful the above theorem can be, let us consider the examples of local Dirichlet spaces corresponding to atomic measures on the unit circle. It is known that local Dirichlet spaces can be realized as de Branges-Rovnyak spaces of $H^2(\D)$ (see \cite[Theorem 3.1]{CGR}), and by a non-trivial result of Shimorin (\cite{Shimo}), local Dirichlet spaces are also CNP spaces. However, without much of a difficulty, the above theorem can be applied to see that local Dirichlet spaces are CNP spaces (see Example~\ref{Exam_Dirich_CNP} below).

An analogous characterization in the setting of the Drury-Arveson space is also obtained in ~\cite{Jesse}. One would therefore expect that such a result should also be true for CNP spaces. This is indeed the case (see Theorem~\ref{CNP_gene_kernel} below). 
Then one wonders about the validity of such a result beyond CNP spaces. It is therefore natural to ask the following question. 

\textbf{Question:} \textit{When is a de Branges-Rovnyak space of a reproducing kernel Hilbert space a CNP space?}

This problem is very difficult to solve in its full generality, and one can not expect to find a characterization which holds true for every reproducing kernel Hilbert space. However, we answer this question for a fairly large class of reproducing kernel Hilbert spaces containing CNP spaces. Our class is motivated by the characterization of CNP kernels, that is, a non-vanishing normalized kernel $K$ is a CNP kernel if and only if $1-\frac{1}{K}\succeq 0$. In order to go beyond CNP kernels, it is therefore natural to consider non-vanishing normalized kernels $K$ such that 
\begin{align}\label{Ker_decompo}
   1- \frac{1}{K}=K_1-K_2, 
\end{align}
for some non-zero kernel $K_1$ and for some kernel $K_2$. This class of kernels obviously contains CNP spaces, as we may choose $K_2=0$, as well as many spaces which are not a CNP space. The primary members of the class, from our perspective, are the Bergman space over the unit disc with kernel 
\[B(z,w)=\frac{1}{(1-z\overline{w})^2}\quad (z,w \in \D),\]
and the Hardy space over the polydisc $\D^n$ with kernel 
\begin{equation}\label{szego kernel}
\mathcal S_n(\z, \w)=\frac{1}{\prod^n_{i=1}(1-z_i \overline{w_i})}\quad (\z=(z_1, \ldots, z_n), \w=(w_1,\ldots, w_n)\in\D^n).
\end{equation}
More generally, tensor products or Schur products (defined below) of CNP kernels are also members of the class. 
One of the main results of this article is the following characterization for the significantly large family of reproducing kernel Hilbert spaces as described above.
\begin{Theorem}\label{NCNP1}
Let $K$ be a non-vanishing kernel on $\Omega$ such that $K$ is normalized at $\sfw\in \Omega$ and  
\[1-\frac{1}{K(x,y)}=g(x)g(y)^*-f(x)f(y)^*\quad  (x,y\in \Omega)
\]
for some function $f: \Omega\to \clb(\cle, \C)$ and a non-zero function $g:\Omega\to \clb(\clf,\C)$, where $\cle$ and $\clf$ are Hilbert spaces. Suppose that $\varphi: \Omega \to \D $ with $\varphi(\sfw)=0$ is a non-constant function in $\clm_1(\clh(K))$. Then the de Branges-Rovnyak space $\clh(K^\varphi)$ is a CNP space if and only if there exists $\Psi\in H^\infty_1(\D, \clb((\cle\oplus \C), \clf))$
such that 
    \[f(x)=g(x) \psi_1(\varphi(x))\ \text{ and } \varphi(x)=g(x) \psi_2(\varphi(x)) \quad (x\in \Omega),\]
 where for all $z\in\D$, $\Psi(z)= \begin{bmatrix} \psi_1(z) & \psi_2(z)\end{bmatrix}$, 
 $\psi_1(z)\in \clb(\cle,\clf)$ and $\psi_2(z)\in \clb(\C,\clf)$.
\end{Theorem}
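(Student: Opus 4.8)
The strategy is to apply the McCullough–Quiggin–Agler–McCarthy characterization (Theorem~\ref{MQAM}) to the de Branges--Rovnyak kernel $K^{\varphi}$. Since $K$ is normalized at $\sfw$ and $\varphi(\sfw)=0$, the kernel $K^{\varphi}(x,y)=(1-\varphi(x)\overline{\varphi(y)})K(x,y)$ is itself non-vanishing (on a neighborhood, or after the usual reduction) and normalized at $\sfw$. Hence $\clh(K^{\varphi})$ is a CNP space if and only if $1-\tfrac{1}{K^{\varphi}}\succeq 0$. So the whole proof reduces to analyzing the kernel
\[
1-\frac{1}{K^{\varphi}(x,y)}=1-\frac{1}{(1-\varphi(x)\overline{\varphi(y)})K(x,y)}.
\]
First I would substitute the hypothesis $\tfrac{1}{K}=1-g(x)g(y)^*+f(x)f(y)^*$ and expand, writing $1-\tfrac{1}{K^{\varphi}}$ over the common "denominator" $(1-\varphi(x)\overline{\varphi(y)})$. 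A direct computation should give
\[
1-\frac{1}{K^{\varphi}(x,y)}=\frac{\varphi(x)\overline{\varphi(y)}+g(x)g(y)^*-f(x)f(y)^*-\varphi(x)\overline{\varphi(y)}\big(g(x)g(y)^*-f(x)f(y)^*\big)}{1-\varphi(x)\overline{\varphi(y)}}.
\]
The point of grouping the terms is that the numerator should factor as $g(x)g(y)^*+\varphi(x)\overline{\varphi(y)}f(x)f(y)^*-f(x)f(y)^*-\varphi(x)\overline{\varphi(y)}g(x)g(y)^*$, i.e. as $(1-\varphi(x)\overline{\varphi(y)})g(x)g(y)^*$ minus $(1-\varphi(x)\overline{\varphi(y)})f(x)f(y)^*$ plus correction terms — I would reorganize until the expression is visibly a positive combination divided by the Szegő-type factor $\tfrac{1}{1-\varphi(x)\overline{\varphi(y)}}$, which is itself a CNP (indeed Szegő-composed-with-$\varphi$) kernel.

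**Reduction to an interpolation/lurking-isometry statement.** After the algebra, positivity of $1-\tfrac1{K^{\varphi}}$ should be equivalent to positivity of a kernel of the form
\[
\frac{1}{1-\varphi(x)\overline{\varphi(y)}}\Big(g(x)g(y)^*-f(x)f(y)^*-\varphi(x)\overline{\varphi(y)}\,\big(\text{something}\big)\Big)\succeq 0,
\]
and I expect the clean reformulation to be: there is a contractive multiplier-type object intertwining the data. Concretely, I would aim to show that $1-\tfrac1{K^{\varphi}}\succeq0$ holds if and only if the block vector $\big(f(x)\ \ \varphi(x)\big)$ (valued in $\clb(\cle\oplus\C,\C)$) factors through $g(x)$ via a contractive holomorphic function of $\varphi(x)$ — that is, there exists $\Psi\in H^{\infty}_1(\D,\clb(\cle\oplus\C,\clf))$ with $\big(f(x)\ \ \varphi(x)\big)=g(x)\Psi(\varphi(x))$, equivalently $f(x)=g(x)\psi_1(\varphi(x))$ and $\varphi(x)=g(x)\psi_2(\varphi(x))$. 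The forward direction (CNP $\Rightarrow$ existence of $\Psi$) should come from a Kolmogorov/lurking-isometry decomposition of the positive kernel together with the fact that the Drury–Arveson (here, one-variable Szegő) kernel $\tfrac1{1-z\bar w}$ composed with $\varphi$ gives exactly the reproducing structure in which such factorizations are realized by contractive multipliers; this is where one invokes the CNP interpolation property of $\cls_1$ (or its vector-valued Leech-type theorem) to produce the bounded holomorphic $\Psi$ on $\D$. The converse direction is a direct substitution: given $\Psi$, plug $f(x)=g(x)\psi_1(\varphi(x))$, $\varphi(x)=g(x)\psi_2(\varphi(x))$ back into the expanded formula and check that $1-\tfrac1{K^{\varphi}}$ becomes a manifestly positive kernel, using $\|\Psi\|_\infty\le1$ and positivity of $\tfrac1{1-\varphi(x)\overline{\varphi(y)}}\otimes(I-\Psi(\varphi(x))\Psi(\varphi(y))^*)$, which is the defining positivity of a contractive multiplier on a vector-valued $H^2(\D)$.

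**Main obstacle.** The routine parts are the kernel algebra and the converse substitution. The genuine difficulty is the forward direction: extracting from "$1-\tfrac1{K^\varphi}\succeq0$" an honest bounded holomorphic $\Psi$ on all of $\D$ with the right block structure $\Psi=[\psi_1\ \psi_2]$, rather than merely a function defined on the range $\varphi(\Omega)$. One must argue that the factorization data, which a priori only live on $\varphi(\Omega)\subseteq\D$, extend to a contractive multiplier of $H^2(\D)\otimes(\cle\oplus\C)$ into $H^2(\D)\otimes\clf$; this requires showing the associated Pick-type kernel on $\varphi(\Omega)$ is positive and then appealing to the complete Nevanlinna–Pick property of $\cls_1$ (equivalently, the Leech factorization theorem) to get the extension, being careful about the non-injectivity of $\varphi$ and about the case $g(x)=0$ or $\dim\clf$ infinite. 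I would also need to handle the normalization/non-vanishing of $K^\varphi$ near $\sfw$ and verify that $g$ being non-zero is exactly what prevents degeneracy — these bookkeeping points, plus the passage from a densely-defined intertwiner to a bounded one, are where the real work lies.
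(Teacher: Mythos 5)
Your outline coincides with the paper's: reduce via Theorem~\ref{MQAM} to positivity of $1-\frac{1}{K^{\varphi}}$, rewrite that kernel as $\big(g(x)g(y)^{*}-h(x)h(y)^{*}\big)K_{\varphi}(x,y)$ with $h(x)=[\,f(x)\ \ \varphi(x)\,]$ and $K_{\varphi}(x,y)=\frac{1}{1-\varphi(x)\overline{\varphi(y)}}$, get the converse by direct substitution (your positivity argument there is fine), and get the forward direction from a Leech-type factorization. One minor slip: since $\frac{1}{K^{\varphi}(x,y)}=\frac{1-g(x)g(y)^{*}+f(x)f(y)^{*}}{1-\varphi(x)\overline{\varphi(y)}}$, the numerator of $1-\frac{1}{K^{\varphi}}$ is $g(x)g(y)^{*}-f(x)f(y)^{*}-\varphi(x)\overline{\varphi(y)}$; the expression you displayed is what you would get from $1-\frac{1-\varphi(x)\overline{\varphi(y)}}{K(x,y)}$, which is not the right quantity, though the grouping you aim for (absorbing $\varphi$ into the block $h$) is indeed the correct one.

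The substantive issue is the forward direction, which you yourself identify as the main obstacle but do not resolve, and the concrete mechanism you propose would fail. You suggest showing that an "associated Pick-type kernel on $\varphi(\Omega)$" is positive and then extending to all of $\D$ by the CNP property of $\cls_1$, worrying about non-injectivity of $\varphi$. But $f$ and $g$ are functions of $x\in\Omega$ that do not factor through $\varphi$, so there is no well-defined kernel on $\varphi(\Omega)$ to extend; this descent is ill-posed and the extension problem you describe is not the right formulation. The paper stays on $\Omega$ throughout: because $\varphi$ maps into $\D$ with $\varphi(\sfw)=0$, the kernel $K_{\varphi}$ is itself a CNP kernel on $\Omega$, so Leech's theorem (Theorem~\ref{Leech_Theo}) applied over $K_{\varphi}$ converts the positivity of $\big(g(x)g(y)^{*}-h(x)h(y)^{*}\big)K_{\varphi}(x,y)$ into the existence of $\Theta\in\clm_1\big(\clh(K_{\varphi})\otimes(\cle\oplus\C),\clh(K_{\varphi})\otimes\clf\big)$ with $h=g\Theta$, and then Ball--Trent--Vinnikov (Theorem~\ref{Theo_BTV}), with embedding space $\C$ so that the relevant Drury--Arveson space is $H^{2}(\D)$, yields $\Theta=\Psi\circ\varphi$ for some $\Psi\in H^{\infty}_1(\D,\clb(\cle\oplus\C,\clf))$. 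The function $\Psi$ is produced on all of $\D$ from the outset, so no extension step and no injectivity of $\varphi$ are ever needed; comparing block entries of $h=g\,\Psi(\varphi)$ finishes the proof, and the whole chain is reversible, giving both implications at once. The lurking-isometry/transfer-function realization you mention in passing would also work (it likewise outputs $\Psi$ globally on $\D$), but your written plan replaces it with the non-viable passage to $\varphi(\Omega)$, so as it stands the crucial implication is missing.
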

This theorem is proved in Section~\ref{Sec2} as Theorem~\ref{NCNP}. As a particular case, namely by taking $f=0$ in Theorem~\ref{NCNP1}, we obtain a characterization in the setting of CNP kernels which recovers the result of ~\cite{Chu} and ~\cite{Jesse} 
(see Theorem~\ref{CNP_gene_kernel} below).
We should mention here that
the normalization of $\varphi$, that is, $\varphi(\sfw)=0$, is harmless and does not put any restriction on the theorem. This is explained in details in Section ~\ref{Sec1} (see Proposition~\ref{Iso_CNPs}). The only reason behind such a normalization is that our characterization takes its simplest form with it.

We then apply Theorem~\ref{NCNP1} to kernels which are either tensor products of CNP kernels or Schur product of two CNP kernels and obtain a complete classification 
(see Theorem~\ref{NCNPm} and Theorem~\ref{NCNPs}). 
In particular, we have completely answered the following questions. 

\textbf{Question:}
(i) \textit{In the Hardy space over the polydisc, which de Branges-Rovnyak subspaces are of CNP type?}

(ii)\textit{ In the weighted Bergman spaces over the unit disc, which de Branges-Rovnyak subspaces are of CNP type?} 

The Hardy space over $\D^n$ is denoted by $H^2(\D^n)$ and the kernel for $H^2(\D^n)$ is the Szeg\H o kernel $\cls_n$ as in ~\eqref{szego kernel}. It turns out that the answer to question (i) above depends on $n$. \textit{For $n\ge 3$, we show that there is no non-trivial de Branges-Rovnyak space of $H^2(\D^n)$ which is a CNP space}. However, for $H^2(\D^2)$ note that if we take $\varphi(\z)=z_i$ $(i=1,2)$ then the de Branges-Rovnyak space $\clh(\cls_2^{\varphi})$ is isometrically isomorphic to $H^2(\D)$ as reproducing kernel Hilbert spaces. Therefore it is a CNP space. 
We show that these are all the de Branges-Rovnyak spaces which are CNP spaces, in some sense. More precisely, we prove the following theorem. We denote by $b_{\mu}$ the automorphism of $\D$ corresponding to $\mu\in \D$ and by $\mathbb{T}$ the unit circle, that is, $\mathbb{T}:=\{z\in \mathbb{C}: |z|=1\}$.
\begin{Theorem}\label{Hardy space over the bidisc}
Let $\varphi\in H^{\infty}_1(\D^2)$. Then the de Branges-Rovnyak space $\clh(\cls_2^{\varphi})$ is a CNP space if and only if $\varphi(\z)=\lambda b_{\mu}(z_i)$ for some $\mu\in\D$, $\lambda\in \mathbb{T}$, and $i=1,2$.
\end{Theorem}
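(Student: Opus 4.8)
The plan is to invoke Theorem~\ref{NCNP1} with $K=\cls_2$, which is non-vanishing and normalized at $\sfw=0$. The decomposition it requires is immediate:
\[
1-\frac{1}{\cls_2(\z,\w)}=z_1\overline{w_1}+z_2\overline{w_2}-z_1z_2\,\overline{w_1w_2}=g(\z)g(\w)^*-f(\z)f(\w)^*,
\]
with $\cle=\C$, $\clf=\C^2$, $g(\z)=\begin{bmatrix}z_1&z_2\end{bmatrix}$ and $f(\z)=z_1z_2$. I would handle the ``if'' direction separately and directly: using $1-b_{\mu}(z)\overline{b_{\mu}(w)}=\frac{(1-|\mu|^{2})(1-z\overline{w})}{(1-\overline{\mu}z)(1-\mu\overline{w})}$, one checks that for $\varphi(\z)=\lambda b_{\mu}(z_i)$ the kernel $\cls_2^{\varphi}$ equals a non-vanishing rank-one weight times the Szeg\H{o} kernel $(1-z_j\overline{w_j})^{-1}$ in the remaining variable $j\neq i$, which is a CNP kernel by Theorem~\ref{MQAM}; since the CNP property of a non-vanishing kernel is unaffected by multiplication with a non-vanishing rank-one weight (equivalently, by normalization), $\clh(\cls_2^{\varphi})$ is a CNP space.

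For the converse I would first reduce to the normalized case. Constant $\varphi$ require only a remark: then $\clh(\cls_2^{\varphi})$ is trivial or a rescaling of $H^2(\D^2)$, which is not a CNP space, while $\varphi$ is not of the stated form, so there is nothing to prove. Hence $\varphi$ may be taken non-constant, so that $\varphi(\D^2)\subseteq\D$ by the maximum principle; and by Proposition~\ref{Iso_CNPs} --- concretely because $\cls_2^{b_{\varphi(0)}\circ\varphi}$ is a non-vanishing rank-one weight times $\cls_2^{\varphi}$ --- I may assume $\varphi(0)=0$. Since composing any automorphism of $\D$ with $z_i\mapsto\lambda z_i$ again gives an automorphism of $\D$ evaluated at $z_i$, it then suffices to prove: if $\clh(\cls_2^{\varphi})$ is a CNP space, then $\varphi(\z)=\lambda z_i$ for some $\lambda\in\mathbb{T}$ and $i\in\{1,2\}$.

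Theorem~\ref{NCNP1} now furnishes $\Psi=\begin{bmatrix}\psi_1&\psi_2\end{bmatrix}\in H^{\infty}_1(\D,M_2(\C))$, with columns $\psi_1=(\alpha_1,\alpha_2)^{t}$ and $\psi_2=(\beta_1,\beta_2)^{t}$, satisfying
\[
z_1z_2=z_1\alpha_1(\varphi(\z))+z_2\alpha_2(\varphi(\z))\quad\text{and}\quad\varphi(\z)=z_1\beta_1(\varphi(\z))+z_2\beta_2(\varphi(\z))
\]
on $\D^2$; I would use only the first identity together with the entrywise bound $|\alpha_j|\le\|\Psi\|\le1$. Setting $z_2=0$, then $z_1=0$, gives $\alpha_1(\varphi(z_1,0))\equiv0$ and $\alpha_2(\varphi(0,z_2))\equiv0$. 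Since a non-zero holomorphic function on $\D$ has discrete zero set and $z_1\mapsto\varphi(z_1,0)$ is continuous on the connected disc, either $\alpha_1\equiv0$ or $\varphi(\cdot,0)\equiv\varphi(0,0)=0$, and similarly for $\alpha_2$; moreover $\alpha_1$ and $\alpha_2$ cannot both vanish, else $z_1z_2\equiv0$. If $\alpha_1\equiv0$ (hence $\alpha_2\not\equiv0$), the first identity reduces to $z_2\alpha_2(\varphi(\z))=z_1z_2$, i.e.\ $\alpha_2(\varphi(\z))=z_1$ on $\D^2$; in particular $\alpha_2$ is non-constant, so for each fixed $z_1$ the continuous map $z_2\mapsto\varphi(z_1,z_2)$ from the connected disc into the discrete set $\{w:\alpha_2(w)=z_1\}$ is constant, whence $\varphi(\z)=\xi(z_1)$ for a holomorphic $\xi\colon\D\to\D$ with $\xi(0)=0$ and $\alpha_2\circ\xi=\mathrm{id}_{\D}$. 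Then $\alpha_2\colon\D\to\D$ by the maximum principle and $\alpha_2(0)=0$; two applications of the Schwarz lemma give $|z|=|\alpha_2(\xi(z))|\le|\xi(z)|\le|z|$ for all $z\in\D$, and the equality case forces $\xi(z)=\lambda z$ with $\lambda\in\mathbb{T}$, i.e.\ $\varphi(\z)=\lambda z_1$. The case $\alpha_2\equiv0$ is symmetric and gives $\varphi(\z)=\lambda z_2$. The remaining possibility $\alpha_1\not\equiv0$, $\alpha_2\not\equiv0$ cannot occur: then $\varphi$ vanishes on $\{z_1=0\}\cup\{z_2=0\}$, hence is divisible by $z_1z_2$ and so vanishes to order $\ge2$ at the origin; as $\alpha_1(0)=\alpha_2(0)=0$, the right-hand side of the first identity then vanishes to order $\ge3$ at the origin, contradicting that $z_1z_2$ vanishes to order exactly $2$.

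I expect the heart of the argument to be this case analysis --- in particular the step turning a relation $\alpha_j\circ\varphi=z_k$ (with $\{j,k\}=\{1,2\}$) into ``$\varphi$ depends only on $z_k$'' and then, via the rigidity in the Schwarz lemma, into linearity --- preceded by the observation that $1-1/\cls_2$ admits the decomposition $gg^*-ff^*$ with $g=\begin{bmatrix}z_1&z_2\end{bmatrix}$ and $f=z_1z_2$, which is exactly what brings Theorem~\ref{NCNP1} into play. The one slightly non-routine ingredient is the comparison of vanishing orders at the origin that eliminates the last case.
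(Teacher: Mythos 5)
Your proposal is correct, and its core is the same as the paper's: the paper proves the general Theorem~\ref{NCNPm} for a tensor product $K_u\otimes K_v$ by feeding exactly your decomposition (there $g=\begin{bmatrix} u & v\end{bmatrix}$, $f=uv$; for the bidisc $u(z)=z_1$, $v(z)=z_2$) into Theorem~\ref{NCNP1}, then analyzing the slice functions through the base point, and finally deduces Theorem~\ref{Hardy space over the bidisc} by the same normalization step via Proposition~\ref{Iso_CNPs} that you use. Where you diverge is only in the endgame, and your variations are sound: you exploit only the first Leech identity $f=g\,\psi_1(\varphi)$, killing the case $\alpha_1\not\equiv 0\not\equiv\alpha_2$ by comparing orders of vanishing at the origin (the paper instead invokes the second identity and evaluates at the base point), and you obtain linearity from $\alpha_2\circ\xi=\mathrm{id}_{\D}$ together with Schwarz rigidity, whereas the paper combines both identities to get $\tilde\psi_{21}\,\psi_{12}\equiv 1$ and concludes that both factors are unimodular constants. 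This buys a slightly leaner and more self-contained proof for the bidisc itself; the trade-off is that your Schwarz-rigidity step uses that $u$ and $v$ are the coordinate functions (so that $\alpha_2\circ\xi$ is a self-map of $\D$ fixing $0$ and equal to the identity), and hence it does not directly yield the paper's more general statement for arbitrary CNP factors $K_u\otimes K_v$, which is what the paper's two-identity argument is designed to cover. Your separate, direct verification of the ``if'' direction and your remark on constant $\varphi$ are fine and, if anything, slightly more careful than the paper's treatment.
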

This result is obtained as a byproduct of a general characterization result in the setting of tensor product of two CNP kernels. 
See Theorem~\ref{NCNPm} below for more details. Thus only a handful of de Branges-Rovnyak subspaces of $H^2(\D^2)$ are CNP spaces. This is perhaps expected as even model spaces are known to have very complicated structure.

The weighted Bergman space over the unit disc has the kernel $B_{\alpha}(z,w)=\frac{1}{(1-z\bar{w})^{2+\alpha}}$ $(z,w\in\D, \alpha> -1)$. For $\alpha=0$, the kernel $B_0$, also denoted as $B$, is the kernel for the Bergman space over the unit disc. It is easy to see that for $\varphi(z)=z$, the de Branges-Rovnyak space $\clh(B^{\varphi})$ has the kernel $\frac{1}{1-z\bar{w}}$, and hence it is a CNP space. We show that for  any $\varphi\in H^{\infty}_1(\D)$, the de Branges-Rovnyak space $\clh(B^{\varphi})$ of the Bergman space is a CNP space if and only if $\varphi(z)=\lambda b_{\mu}(z)$ for some $\lambda\in \mathbb{T}$ and $\mu\in\D$. This result is obtained as a consequence of a general characterization in the setting of Schur product of CNP kernels (see Theorem~\ref{NCNPs} below for more details). The authors came to know after completing the work that this result is also obtained in a recent article ~\cite{LuoZhu}. We also show that for the weighted Bergman spaces with kernel $B_{\alpha}$ $(\alpha\ge 1)$, there are no non-trivial de Branges-Rovnyak spaces which are CNP. This partially answers a question left open in ~\cite{LuoZhu} (see page $2$ in \cite{LuoZhu}), as our approach does not seem to work for the case $0<\alpha<1$.

The article is organized as follows. In the next section, we state some known results and show that the normalization we assume in various results is harmless. Our characterization for de Branges-Rovnyak spaces of a large class of non-CNP kernels which are CNP spaces is considered in Section~\ref{Sec2}. Tensor products of CNP kernels and Schur product of two CNP kernels are considered in Section~\ref{Sec 3}, and as a consequence the results for the weighted Bergman spaces on $\D$ and the Hardy space on $\D^n$ are obtained.

\newsection{Background materials}\label{Sec1}

For a scalar-valued kernel $K$ on $\Omega$ and a Hilbert space $\cle$, the space $\clh(K)\otimes\cle$ is a reproducing kernel Hilbert space with operator-valued kernel $(x,y)\in \Omega \times \Omega \mapsto K(x,y) I_\cle$. An element $f$ in $\clh(K)\otimes\cle$ is viewed as an $\cle$-values function $f:\Omega\to \cle$ and it satisfies
\[\langle f(x), \eta\rangle_{\cle}=\langle f, K(., x)\otimes\eta\rangle_{\clh(K)\otimes\cle} \quad (x\in \Omega, \eta\in \cle).\]
For Hilbert spaces $\cle$ and $\clf$, a function $\Theta :\Omega \to \clb(\clf,\cle)$ is said to be a multiplier from $\clh(K)\otimes \clf$ to $\clh(K)\otimes \cle$ if $\Theta f\in \clh(K)\otimes \cle$ for all $f\in \clh(K)\otimes \clf$, where $(\Theta f)(x)=\Theta(x)(f(x))$ for all $x\in\Omega$. By an application of closed graph theorem, it is easy to show that the multiplication operator 
\[M_\Theta : \clh(K)\otimes \clf \to \clh(K)\otimes \cle,\ f\mapsto \Theta f,\] is a bounded operator.  
The space of multipliers from $\clh(K)\otimes \clf$ to $\clh(K)\otimes \cle$ with multiplier norm is a Banach space which we denote by $\clm(\clh(K)\otimes \clf,\clh(K)\otimes \cle)$ and the closed unit ball is denoted by $\clm_1(\clh(K)\otimes \clf,\clh(K)\otimes \cle)$. For more details on the theory of vector-valued reproducing kernel Hilbert spaces and their multipliers we refer the reader to \cite{PaulRaghu, AglerM}. We make crucial use of two well-known results in the theory of CNP spaces. One of them is a Douglas type factorization result due to Leech (\cite{Leech}); for the following version of the result see ~\cite[Theorem 8.57]{AglerM}. 

\begin{Theorem}[Leech]\label{Leech_Theo}
Let $K$ be a CNP kernel on $\Omega$. Suppose $g:\Omega \to \clb(\cle, \C)$ and $h:\Omega \to \clb(\clf, \C)$ are given functions for some Hilbert spaces $\cle$ and $\clf$. Then 
\[\big(g(x) g(y)^*-h(x)h(y)^*\big)K(x,y)\succeq 0\]
if and only if  there exists a multiplier $\Theta \in \clm_1(\clh(K)\otimes \clf,\clh(K)\otimes \cle)$   such that 
\[h(x)=g(x)\Theta(x) \quad (x\in \Omega).\]
\end{Theorem}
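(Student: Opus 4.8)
The plan is to prove the trivial implication ($\Leftarrow$) by a direct sandwiching and the substantive implication ($\Rightarrow$) by a lurking-isometry argument built on the Agler--McCarthy realization of the CNP kernel. For ($\Leftarrow$), suppose $h(x)=g(x)\Theta(x)$ for some $\Theta\in\clm_1(\clh(K)\otimes\clf,\clh(K)\otimes\cle)$. The standard characterization of contractive multipliers gives the $\clb(\cle)$-valued positivity $(I_\cle-\Theta(x)\Theta(y)^*)K(x,y)\succeq0$; sandwiching by $g$ and using $g(x)\Theta(x)=h(x)$, $\Theta(y)^*g(y)^*=h(y)^*$ yields
\[g(x)\big(I_\cle-\Theta(x)\Theta(y)^*\big)g(y)^*K(x,y)=\big(g(x)g(y)^*-h(x)h(y)^*\big)K(x,y)\succeq0.\]
This direction uses no CNP hypothesis.

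For ($\Rightarrow$), I would first reduce to a normalized, non-vanishing $K$ (harmless, cf. the discussion preceding Theorem~\ref{MQAM} and Proposition~\ref{Iso_CNPs}) and invoke Theorem~\ref{MQAM} to write $K(x,y)=1/(1-u(x)u(y)^*)$ with $u:\Omega\to\clb_1(\clz,\C)$ for an auxiliary Hilbert space $\clz$; the crucial consequences are $\|u(x)\|<1$ for every $x$ and the identity $K(x,y)\big(1-u(x)u(y)^*\big)=1$. Since the given kernel is positive, the stated factorization of positive kernels lets me write $\big(g(x)g(y)^*-h(x)h(y)^*\big)K(x,y)=p(x)p(y)^*$ for some $p:\Omega\to\clb(\clp,\C)$. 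Multiplying this scalar identity by $(1-u(x)u(y)^*)$ and using $K(1-uu^*)=1$ collapses everything to the clean relation
\[g(x)g(y)^*+\big(p\otimes u\big)(x)\big(p\otimes u\big)(y)^*=h(x)h(y)^*+p(x)p(y)^*\qquad(x,y\in\Omega),\]
where $(p\otimes u)(x)=p(x)\otimes u(x)$ so that $(p\otimes u)(x)(p\otimes u)(y)^*=p(x)p(y)^*u(x)u(y)^*$. This step, where the CNP form of $K$ does the real work, is what lets the lurking-isometry machinery close.

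The relation above equates two Gram kernels. Writing $\alpha(x)=g(x)^*\oplus\big(p(x)^*\otimes u(x)^*\big)\in\cle\oplus(\clp\otimes\clz)=:\clx$ and $\beta(x)=h(x)^*\oplus p(x)^*\in\clf\oplus\clp=:\cly$, it reads $\langle\alpha(y),\alpha(x)\rangle_\clx=\langle\beta(y),\beta(x)\rangle_\cly$ for all $x,y$; hence $\alpha(x)\mapsto\beta(x)$ extends to an isometry on $\overline{\mathrm{span}}\{\alpha(x)\}$ and then to a contraction $V=\begin{bmatrix}V_{11}&V_{12}\\V_{21}&V_{22}\end{bmatrix}:\clx\to\cly$. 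With $E_{u(x)}:\clp\to\clp\otimes\clz$, $\xi\mapsto\xi\otimes u(x)^*$, the intertwining $V\alpha(x)=\beta(x)$ reads
\[h(x)^*=V_{11}g(x)^*+V_{12}E_{u(x)}p(x)^*,\qquad p(x)^*=V_{21}g(x)^*+V_{22}E_{u(x)}p(x)^*.\]
Because $\|E_{u(x)}\|=\|u(x)\|<1$ and $V$ is a contraction, $I-V_{22}E_{u(x)}$ is invertible, so the second (lurking) equation gives $p(x)^*=(I-V_{22}E_{u(x)})^{-1}V_{21}g(x)^*$; substituting into the first yields $h(x)^*=\Theta(x)^*g(x)^*$, i.e.\ $h=g\Theta$, with transfer function
\[\Theta(x)^*=V_{11}+V_{12}E_{u(x)}\big(I-V_{22}E_{u(x)}\big)^{-1}V_{21}.\]

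The main obstacle is the final verification that this $\Theta$ is a genuine contractive multiplier, i.e.\ $(I_\cle-\Theta(x)\Theta(y)^*)K(x,y)\succeq0$. This is the heart of the realization theorem for CNP kernels: running the colligation identities backwards, the contractivity $V^*V\le I$ together with the Neumann expansion of $(I-V_{22}E_{u(x)})^{-1}$ (convergent precisely because $\|u(x)\|<1$) forces this positivity, which I would carry out by the standard Schur--Agler computation. The invertibility just used is exactly what the open-ball condition $u:\Omega\to\clb_1(\clz,\C)$ from the CNP characterization supplies, and it is the feature distinguishing CNP kernels that makes the argument go through. Finally I would note that the normalization reduction at the outset is harmless, since an isometric reproducing-kernel isomorphism transports both the positivity condition and contractive multipliers.
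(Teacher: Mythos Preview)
The paper does not prove this theorem; it is quoted as a known background result with a pointer to \cite[Theorem 8.57]{AglerM}. So there is no ``paper's own proof'' to compare against.

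That said, your argument is the standard lurking-isometry proof of Leech's theorem for CNP kernels, and the outline is sound. The easy direction is clean. For the substantive direction, your derivation of the Gram identity
\[
g(x)g(y)^*+(p\otimes u)(x)(p\otimes u)(y)^*=h(x)h(y)^*+p(x)p(y)^*
\]
from $K(x,y)(1-u(x)u(y)^*)=1$ is correct, and the colligation/transfer-function formula for $\Theta(x)^*$ is exactly right; the invertibility of $I-V_{22}E_{u(x)}$ does indeed follow from $\|u(x)\|<1$. Two points deserve a little more care if you intend this as a full proof rather than a sketch. First, the final step---showing $(I_\cle-\Theta(x)\Theta(y)^*)K(x,y)\succeq0$---is not automatic from contractivity of $V$ alone; one really does need to run the colligation algebra (or, more cleanly, first extend $V$ to a \emph{coisometry} by enlarging $\clf\oplus\clp$, which makes the positivity computation exact rather than an inequality). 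Second, the normalization reduction should be stated precisely: if $K$ is CNP but not normalized, one passes to $\widetilde K(x,y)=K(x,y)/\big(\delta(x)\overline{\delta(y)}\big)$ with $\delta(x)=K(x,\sfw)/\sqrt{K(\sfw,\sfw)}$, checks that the hypothesis and conclusion transfer unchanged (the same $\Theta$ works), and only then invokes Theorem~\ref{MQAM}. With those two points filled in, this is essentially the textbook proof.
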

The other result that we need is due to Ball, Trent, and Vinnikov (\cite[Theorem 3.1]{Ball}), which describes the space of multipliers on CNP spaces. Some more terminologies are needed to state the result. Let $\cle$ be a Hilbert space, and let $\mathbb B_1(\cle)$ be the open unit ball in $\cle$. Then the Drury-Arveson space on $\mathbb B_1(\cle)$ is a reproducing kernel Hilbert space with kernel
\begin{equation}\label{DA kernel}
\mathcal K(\zeta,\eta)=\frac{1}{1-\langle \zeta, \eta\rangle}\quad (\zeta,\eta \in \mathbb B_1(\cle)).
\end{equation}

For each function $u:\Omega\to \clb_1(\cle,\mathbb C)$, we denote by $K_u$ the CNP kernel associated to $u$ defined as 
\begin{equation}\label{CNP kernel}
K_{u}(x,y)= \frac{1}{1-u(x)u(y)^*}\quad (x,y\in\Omega).
\end{equation}
Recall that, by Theorem~\ref{MQAM}, any non-vanishing CNP kernel $K$ on $\Omega$ which is normalized at $\sfw\in\Omega$ is of the form $K_u$ for some $u:\Omega\to \clb_1(\cle,\mathbb C)$ with $u(\sfw)=0$. In addition, $K$ is a holomorphic kernel if and only if $u$ is holomorphic on $\Omega$.

\begin{Theorem}[Ball, Trent, and Vinnikov]\label{Theo_BTV}
  Let $K_u$ be a CNP kernel on $\Omega$ as in \eqref{CNP kernel}. 
  Then $\Theta \in \clm_1(\clh(K_u)\otimes \clf,\clh(K_u)\otimes \clg)$ for some Hilbert spaces $\clf$ and $\clg$ if and only if there exists a $\Psi\in \clm_1(\clh(\clk)\otimes \clf, \clh(\clk)\otimes \clg)$ such that  
  \[\Theta(x)=\Psi(u(x)),\quad (x\in \Omega)\]
where $\clk$ is the kernel of the Drury-Arveson space on $\mathbb B_1(\cle)$ as in ~\eqref{DA kernel}. 
\end{Theorem}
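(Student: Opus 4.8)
Throughout I identify each value $u(x)\in\clb_1(\cle,\C)$ with the point $\zeta_x\in \mathbb B_1(\cle)$ representing the functional $u(x)$ via Riesz duality, so that $u(x)u(y)^*=\langle \zeta_y,\zeta_x\rangle$ and, with the attendant convention for the kernel,
\[
K_u(x,y)=\frac{1}{1-u(x)u(y)^*}=\clk\big(u(x),u(y)\big)\qquad(x,y\in\Omega).
\]
Thus $u$ embeds $\Omega$ into $\mathbb B_1(\cle)$ and pulls the Drury--Arveson kernel back to $K_u$. The whole proof rests on the standard equivalence (vector-valued form of the Pick-matrix criterion, see~\cite{AglerM}): a function $\Xi$ lies in $\clm_1(\clh(L)\otimes\clf,\clh(L)\otimes\clg)$ if and only if the operator-valued kernel $(I_\clg-\Xi(\cdot)\Xi(\cdot)^*)L(\cdot,\cdot)$ is positive.

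\textbf{The easy direction.} Suppose $\Psi\in\clm_1(\clh(\clk)\otimes\clf,\clh(\clk)\otimes\clg)$ and set $\Theta(x)=\Psi(u(x))$. By the criterion above,
\[
\big(I_\clg-\Psi(\zeta)\Psi(\eta)^*\big)\,\clk(\zeta,\eta)\succeq 0\qquad(\zeta,\eta\in\mathbb B_1(\cle)).
\]
Sampling this positive kernel at the points $\zeta=u(x),\ \eta=u(y)$ (a principal sub-sampling of a positive kernel is positive) yields
\[
\big(I_\clg-\Theta(x)\Theta(y)^*\big)\,K_u(x,y)\succeq 0\qquad(x,y\in\Omega),
\]
which is exactly the statement that $\Theta\in\clm_1(\clh(K_u)\otimes\clf,\clh(K_u)\otimes\clg)$.

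\textbf{The hard direction (realization).} Conversely, assume $\Theta\in\clm_1(\clh(K_u)\otimes\clf,\clh(K_u)\otimes\clg)$, so that
\[
\frac{I_\clg-\Theta(x)\Theta(y)^*}{1-\langle u(x),u(y)\rangle}\succeq 0\qquad(x,y\in\Omega).
\]
By Kolmogorov factorization there are a Hilbert space $\clx$ and a function $H:\Omega\to\clb(\clx,\clg)$ with
\[
I_\clg-\Theta(x)\Theta(y)^*=\big(1-\langle u(x),u(y)\rangle\big)H(x)H(y)^*,
\]
which rearranges to the lurking-isometry identity
\[
I_\clg+\langle u(x),u(y)\rangle H(x)H(y)^*=\Theta(x)\Theta(y)^*+H(x)H(y)^*.
\]
Writing the cross term as $\widehat H(x)\widehat H(y)^*$ for a suitable $\widehat H:\Omega\to\clb(\cle\otimes\clx,\clg)$ assembled from $u$ and $H$, this identity says that the assignment $(g,\widehat H(y)^*g)\mapsto(\Theta(y)^*g,H(y)^*g)$, for $g\in\clg$ and $y\in\Omega$, preserves inner products. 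Hence it extends to an isometry from a subspace $\clw\subseteq\clg\oplus(\cle\otimes\clx)$ into $\clf\oplus\clx$, and, after enlarging $\clx$ if necessary, to a unitary colligation
\[
U=\begin{bmatrix}A&B\\ C&D\end{bmatrix}:\ \clg\oplus(\cle\otimes\clx)\longrightarrow\clf\oplus\clx.
\]
Reading off the associated Drury--Arveson transfer function
\[
\Psi(\zeta)=A+B\big(I_{\cle\otimes\clx}-(E_\zeta\otimes I_\clx)D\big)^{-1}(E_\zeta\otimes I_\clx)C,\qquad E_\zeta=\langle\,\cdot\,,\zeta\rangle,
\]
gives a function on all of $\mathbb B_1(\cle)$; the resolvent exists because $\|E_\zeta\|=\|\zeta\|<1$ and $D$ is a contraction, and the standard Schur--Agler/Drury--Arveson computation shows $\Psi\in\clm_1(\clh(\clk)\otimes\clf,\clh(\clk)\otimes\clg)$. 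Finally, the colligation was built precisely from the data at the points $u(x)$, so by construction $\Psi(u(x))=\Theta(x)$ for all $x\in\Omega$.

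\textbf{Main obstacle.} The real content is entirely in the hard direction, and specifically in the realization step: passing from the positivity over the \emph{image} $u(\Omega)$ to a multiplier defined on \emph{all} of $\mathbb B_1(\cle)$. This works only because the denominator appearing above is the Drury--Arveson form $1-\langle u(x),u(y)\rangle$, so that the transfer function extends holomorphically and contractively off $u(\Omega)$; it is here that the universal (CNP) nature of $\clk$ is used, and where the lurking-isometry--to--unitary-colligation construction must be carried out carefully. An alternative organization of this same step avoids transfer functions: the subspace $\clr=\overline{\operatorname{span}}\{\clk(\cdot,u(y)):y\in\Omega\}$ is co-invariant for the multiplier algebra of $\clh(\clk)$ and is unitarily identified with $\clh(K_u)$, and one extends the compressed multiplier $M_\Theta$ to $M_\Psi$ by a commutant-lifting argument on the Drury--Arveson space; the positivity above supplies exactly the contractive interpolation data needed. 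Either route is routine once set up, apart from the bookkeeping of inner-product conventions in the identification $u(x)\leftrightarrow\zeta_x$.
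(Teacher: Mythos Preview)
The paper does not prove this theorem at all: it is stated in Section~\ref{Sec1} as background material and attributed to Ball, Trent, and Vinnikov \cite[Theorem~3.1]{Ball}, with no proof given. So there is no ``paper's own proof'' to compare against.

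That said, your sketch is essentially the standard Ball--Trent--Vinnikov argument: pull back the Drury--Arveson kernel via $u$, use the multiplier criterion $(I-\Theta\Theta^*)K\succeq 0$, Kolmogorov-factor, run the lurking-isometry construction to obtain a unitary colligation, and read off the transfer function on all of $\mathbb B_1(\cle)$. The easy direction is fine as written. In the hard direction the overall strategy is correct, but a couple of points deserve tightening if you intend this as more than an outline: (i) the map $\widehat H$ should be written explicitly (e.g.\ $\widehat H(x)(e\otimes\xi)=\langle e,\zeta_x\rangle H(x)\xi$) rather than asserted to exist; (ii) the block structure of your colligation and the resulting transfer function are set up so that $\Psi(\zeta)\in\clb(\clg,\clf)$, i.e.\ you have realized $\Theta(\cdot)^*$ rather than $\Theta(\cdot)$, so either take an adjoint at the end or reverse the roles in the lurking isometry; (iii) the composition $(E_\zeta\otimes I_\clx)D$ in your resolvent does not typecheck as written---you need the ``row'' operator $Z_\zeta:\clx\to\cle\otimes\clx$ (or its adjoint) in the correct slot. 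These are bookkeeping issues, not conceptual gaps, and the commutant-lifting alternative you mention at the end is also a legitimate route.
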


 In what follows, we consider non-vanishing kernels on $\Omega$ which are normalized at $\sfw\in\Omega$. The normalization of the kernel is not essential. Nevertheless, we make this assumption for simplicity. For such a kernel $K$, the central object  of our study is de Branges-Rovnyak subspaces of $\clh(K)$ corresponding to non-constant functions $\varphi:\Omega\to \D$ in $\mathcal M_1(\clh(K))$ such that $\varphi(\sfw)=0$. The normalization that $\varphi(\sfw)=0$ does not put any restriction in our characterization. Because corresponding to any non-constant multiplier $\varphi:\Omega\to \D$ one can construct a multiplier $\tilde{\varphi}:\Omega \to \D$ such that $\tilde{\varphi}(\sfw)=0$, and the de Branges-Rovnyak spaces corresponding to $\varphi$ and $\tilde{\varphi}$ are isometrically isomorphic as reproducing kernel Hilbert spaces. This is equivalent to M\"obius invariance of de Branges-Rovnyak kernels. Such a result is well known (See \cite[Lemma 4.7]{LuoGR}). However, we include a proof for completeness of this article. We begin with a simple lemma.

  \begin{Lemma} \label{Mul_unitary}
Let $K$ and $\widetilde{K}$ be two kernels on $\Omega$. If $f\in\clm(\clh(K),\clh(\widetilde{K}))$
is a nowhere vanishing function and $M_f^*$ is an isometry, then $M_f$ is a unitary. 
    \end{Lemma}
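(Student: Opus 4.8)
The plan is to show that $M_f^*M_f = I$, so that $M_f$ is both an isometry and a co-isometry, hence unitary. Since we already know $M_f M_f^* = I$ (as $M_f^*$ is an isometry), it suffices to establish that $M_f$ is injective with dense range, or equivalently that $M_f$ is bounded below; combined with $M_f M_f^* = I$ this forces $M_f^* M_f = I$.

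First I would use the nowhere-vanishing hypothesis on $f$ to show $M_f$ is injective: if $M_f h = fh = 0$ in $\clh(\widetilde K)$, then $fh$ is the zero function on $\Omega$, and since $f(x) \neq 0$ for every $x \in \Omega$, we get $h(x) = 0$ for all $x$, so $h = 0$ in $\clh(K)$. Next, from $M_f M_f^* = I_{\clh(\widetilde K)}$ the operator $M_f$ is surjective onto $\clh(\widetilde K)$. An injective and surjective bounded operator between Hilbert spaces is invertible by the open mapping theorem, so $M_f^{-1}$ exists and is bounded. Then from $M_f M_f^* = I$ we may multiply on the left by $M_f^{-1}$ to get $M_f^* = M_f^{-1}$, and therefore $M_f^* M_f = M_f^{-1} M_f = I$ as well. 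Hence $M_f$ is unitary.

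An alternative route, which avoids invoking surjectivity explicitly, is: since $M_f^*$ is an isometry, $P := M_f M_f^*$ is the orthogonal projection onto $\operatorname{ran} M_f$ — wait, more precisely $M_f M_f^* = I$ already tells us $\operatorname{ran} M_f = \clh(\widetilde K)$, but in the general Lemma statement one should be careful: the hypothesis is literally $M_f^* M_f^{**} = M_f^* $ being an isometry means $(M_f^*)^* M_f^* = M_f M_f^* = I$. So $M_f$ is a co-isometry. A co-isometry is unitary precisely when it is injective, and we have shown $M_f$ is injective from the nowhere-vanishing property. This is the cleanest phrasing and I would present it this way.

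The only mild subtlety — and the step I would be most careful about — is the passage from "$fh = 0$ as an element of the Hilbert space $\clh(\widetilde K)$" to "$h = 0$ as an element of $\clh(K)$"; this uses that elements of a reproducing kernel Hilbert space on $\Omega$ are genuine functions on $\Omega$, so $fh = 0$ in $\clh(\widetilde K)$ means $(fh)(x) = f(x) h(x) = 0$ for every $x \in \Omega$, and then nonvanishing of $f$ gives $h \equiv 0$ pointwise, hence $h = 0$ in $\clh(K)$. Everything else is standard Hilbert space operator theory (a co-isometry that is injective is unitary), so there is no real obstacle here.
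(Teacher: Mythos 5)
Your proof is correct. The one point that needs care — passing from $fh=0$ in $\clh(\widetilde K)$ to $h=0$ in $\clh(K)$ — is handled properly: elements of a reproducing kernel Hilbert space are genuine functions on $\Omega$, so $f(x)h(x)=0$ for all $x$ together with the nowhere-vanishing of $f$ forces $h\equiv 0$, and the zero function is the zero element. Combined with $M_fM_f^*=I$ and the standard fact that an injective co-isometry is unitary (equivalently, your open-mapping-theorem variant), this closes the argument. The paper's proof reaches the same conclusion by the dual route: it uses the well-known adjoint formula $M_f^*\bigl(\widetilde K(\cdot,\lambda)\bigr)=\overline{f(\lambda)}\,K(\cdot,\lambda)$, so that nonvanishing of $f$ shows the closed range of the isometry $M_f^*$ contains every kernel vector $K(\cdot,\lambda)$; since kernel vectors are total, $M_f^*$ is a surjective isometry, hence unitary. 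The two arguments are equivalent in substance, because $\ker M_f=(\operatorname{ran}M_f^*)^\perp$: you establish injectivity of $M_f$ directly from the function-space structure, while the paper establishes density of $\operatorname{ran}M_f^*$ from the kernel-vector formula. Your version is slightly more elementary (no adjoint formula needed); the paper's stays entirely inside the reproducing-kernel toolkit and makes the action of $M_f^*$ explicit, which is the formula they reuse elsewhere (e.g.\ in Proposition~\ref{Iso_CNPs}).
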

    \begin{proof}
Given any point $\lambda\in\Omega$, the kernel vector at $\lambda$ corresponding to $K$ is denoted by $K(.,\lambda)$. It is well-known that for any $f\in\clm(\clh(K),\clh(\widetilde{K}))$, $M_f^*(\widetilde{K}(.,\lambda))=\overline{f(\lambda)}K(.,\lambda)$. Then the proof follows from the fact that kernel vectors form a total subset of the corresponding reproducing kernel Hilbert space. 
\end{proof}

\begin{Proposition}\label{Iso_CNPs}
Let $K$ be a kernel on $\Omega$ and $\sfw\in\Omega$. Then for each function $\varphi:\Omega\to \D$ in $\clm_1(\clh(K))$ there exists a function $\tilde{\varphi}:\Omega\to \D$ in $\clm_1(\clh(K))$ with  $\Tilde{\varphi}(\sfw)=0$ such that $\clh(K^\varphi)$ is isometrically isomorphic to $\clh(K^{\Tilde{\varphi}})$ as reproducing kernel Hilbert spaces.
Moreover, $\clh(K^\varphi)$ is a CNP space if and only if $\clh(K^{\Tilde{\varphi}})$ is a CNP space.
\end{Proposition}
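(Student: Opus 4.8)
The plan is to construct $\tilde\varphi$ explicitly using a M\"obius transformation applied \emph{on the target side} of $\varphi$, and then exhibit an explicit nowhere-vanishing multiplier intertwining the two de Branges-Rovnyak kernels, so that Lemma~\ref{Mul_unitary} applies. Set $\mu:=\varphi(\sfw)\in\D$ and let $b_\mu(z)=\frac{\mu-z}{1-\bar\mu z}$ be the corresponding disc automorphism, so $b_\mu(\mu)=0$ and $b_\mu\circ b_\mu=\mathrm{id}$. Define $\tilde\varphi:=b_\mu\circ\varphi$; then $\tilde\varphi:\Omega\to\D$, $\tilde\varphi(\sfw)=0$, and $\tilde\varphi$ is non-constant. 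The first routine point to check is that $\tilde\varphi\in\clm_1(\clh(K))$: this follows from the classical identity $1-b_\mu(z)\overline{b_\mu(w)}=\dfrac{(1-|\mu|^2)(1-z\bar w)}{(1-\bar\mu z)(1-\mu\bar w)}$, which shows that $(1-\tilde\varphi(x)\overline{\tilde\varphi(y)})K(x,y) = \lambda(x)\,(1-\varphi(x)\overline{\varphi(y)})K(x,y)\,\overline{\lambda(y)}$ with $\lambda(x):=\dfrac{\sqrt{1-|\mu|^2}}{1-\bar\mu\,\varphi(x)}$, a nowhere-vanishing function on $\Omega$ (since $\varphi$ maps into $\D$). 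Positive semidefiniteness is preserved under such a conjugation, so $\tilde\varphi$ is indeed a contractive multiplier.

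Next I would observe that the displayed identity is exactly the statement
\[
K^{\tilde\varphi}(x,y)=\lambda(x)\,K^{\varphi}(x,y)\,\overline{\lambda(y)}\qquad(x,y\in\Omega),
\]
i.e. the de Branges-Rovnyak kernels $K^\varphi$ and $K^{\tilde\varphi}$ differ only by the rank-one weight $\lambda$. This is precisely the relation saying $\clh(K^\varphi)$ and $\clh(K^{\tilde\varphi})$ are isometrically isomorphic as reproducing kernel Hilbert spaces in the sense defined in the Introduction (take $F=\mathrm{id}$ and the non-vanishing function $\lambda$). To package this as a unitary multiplier so that the ``moreover'' part becomes transparent, define $f:\Omega\to\C$ by $f:=1/\lambda$, i.e. $f(x)=\dfrac{1-\bar\mu\,\varphi(x)}{\sqrt{1-|\mu|^2}}$; this is nowhere vanishing, and the kernel identity rearranges to $\big(\overline{f(y)}^{-1}\overline{f(x)}^{-1}\,K^{\varphi}(x,y)\big) = K^{\tilde\varphi}(x,y)$, from which one reads off that $M_f^*\big(K^{\varphi}(\cdot,\lambda)\big)=\overline{f(\lambda)}\,K^{\tilde\varphi}(\cdot,\lambda)$ (a computation on kernel vectors, which span), so $M_f^*$ maps $\clh(K^\varphi)$ isometrically onto a total subset of $\clh(K^{\tilde\varphi})$ and hence $M_f:\clh(K^{\tilde\varphi})\to\clh(K^{\varphi})$ is well defined; by Lemma~\ref{Mul_unitary} (with the roles of $K,\widetilde K$ there played by $K^{\tilde\varphi},K^{\varphi}$), $M_f$ is unitary.

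Finally, for the ``moreover'' statement, I would note that the property of being a CNP space is invariant under isometric isomorphism of reproducing kernel Hilbert spaces: if $\clh(K^{\tilde\varphi})$ is CNP, then by Theorem~\ref{MQAM} (after normalizing, which is legitimate since normalization is explicitly noted to be harmless) $K^{\tilde\varphi}$ has the form $1/(1-u(x)u(y)^*)$; pulling back by $F=\mathrm{id}$ and the weight $\lambda$ shows $K^{\varphi}$ has the same form up to the equivalence, so $\clh(K^{\varphi})$ is CNP, and conversely. Alternatively, and more cleanly, one invokes the equivalent characterization $1-\tfrac1K\succeq0$ for non-vanishing normalized kernels together with the fact that the normalized versions of $K^\varphi$ and $K^{\tilde\varphi}$ coincide (the weight $\lambda$ disappears upon normalizing both kernels at $\sfw$, since a rank-one conjugation is absorbed). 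I expect the only mild obstacle to be bookkeeping: verifying that after normalizing both kernels at $\sfw$ the weight $\lambda$ genuinely cancels, and making sure Lemma~\ref{Mul_unitary} is being applied with the correct kernels and with $f$ (not $\lambda$) as the multiplier; neither is difficult, just in need of care.
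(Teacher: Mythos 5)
Your proposal is correct and follows essentially the same route as the paper: define $\tilde\varphi=b_\mu\circ\varphi$, derive the rescaling identity $K^{\tilde\varphi}(x,y)=\lambda(x)K^{\varphi}(x,y)\overline{\lambda(y)}$ with the nowhere-vanishing weight $\lambda(x)=\sqrt{1-|\mu|^2}/(1-\bar\mu\varphi(x))$, and apply Lemma~\ref{Mul_unitary} to get the unitary multiplier (you merely run it with the reciprocal weight in the opposite direction, and have a couple of harmless notational slips, e.g.\ a misplaced conjugate and reusing $\lambda$ as a point). The only difference of substance is cosmetic: for the ``moreover'' part the paper simply cites \cite[Theorem 7.28]{AglerM} applied to the same identity, whereas you argue the equivalent fact that the CNP property is unchanged under conjugation by a nowhere-vanishing function via normalization at $\sfw$.
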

\begin{proof}
If $\varphi (\sfw)=0$, then there is nothing to prove. Let us assume that $\varphi(\sfw)=\mu\neq 0$. Set $\Tilde{\varphi}:=b_{\mu}\circ\varphi$, where  $b_{\mu}$ is the disc automorphism defined by $b_{\mu}(z)=\frac{z-\mu}{1-\bar{\mu}z}$ ($z\in\D$). Then $\tilde{\varphi}(\sfw)=0$ and for all $x,y\in\Omega$,
\begin{align}\label{CNP_Identity}
K^{\Tilde{\varphi}}(x,y)&=(1-\Tilde{\varphi}(x)\overline{\Tilde{\varphi}(y)})K(x, y)\nonumber\\
& = \left(1-\frac{(\varphi(x)-\mu) (\overline{\varphi(y)}-\bar{\mu})}{(1-\bar{\mu}\varphi(x))(1-\mu\overline{\varphi(y)})}\right) K(x,y)\nonumber\\
&=\frac{(1-|\mu|^2)}{(1-\bar{\mu}\varphi(x))(1-\mu\overline{\varphi(y)})}(1-\varphi(x)\overline{\varphi(y)})K(x,y)\nonumber\\
&= f(x)K^\varphi(x,y) \overline{f(y)},
    \end{align}
where $f(x)=\frac{\sqrt{(1-|\mu|^2)}}{1-\bar{\mu}\varphi(x)}\neq 0$ for all $x\in\Omega$. From the above identity one can infer that $\tilde{\varphi}\in \clm_1(\clh(K))$, $f\in\clm(\clh(K^\varphi),\clh(K^{\Tilde{\varphi}}))$, and 
\[M_fM_f^*(K^{\Tilde{\varphi}}(., y)) = K^{\Tilde{\varphi}}(., y)
\]
for all $y\in\Omega$. Thus $M_{f}^*$ is an isometry, and hence a unitary by Lemma ~\ref{Mul_unitary}. Since the function $f(x)=\frac{\sqrt{(1-|\mu|^2)}}{1-\bar{\mu}\varphi(x)}\neq 0$ for all $x\in \Omega$, the moreover part follows by applying \cite[Theorem 7.28]{AglerM} to the identity \eqref{CNP_Identity}. This completes the proof.
\end{proof}

\textit{We use the following notation throughout the article:}  
\[H^\infty_1(\D, \clb(\clf, \clg)):=\{\varphi:\D \to \clb(\clf, \clg)|\ \varphi \text{ is holomorphic on } \D, \|\varphi\|_{\infty}=\sup_{z\in \D}\|\varphi(z)\|\leq 1\}.\]

\newsection{Characterization: The CNP and non-CNP case}\label{Sec2}
The main content of this section is the proof of Theorem~\ref{NCNP1}. We again state Theorem~\ref{NCNP1} below for the convenience of the reader. 

\begin{Theorem}\label{NCNP}
Let $K$ be a non-vanishing kernel on $\Omega$ such that $K$ is normalized at $\sfw\in\Omega$ and  
\[1-\frac{1}{K(x,y)}=g(x)g(y)^*-f(x)f(y)^*\quad  (x,y\in \Omega)
\]
for some function $f: \Omega\to \clb(\cle, \C)$ and a non-zero function $g:\Omega\to \clb(\clf,\C)$, where $\cle$ and $\clf$ are Hilbert spaces. Suppose that $\varphi: \Omega \to \D $ with $\varphi(\sfw)=0$ is a non-constant function in $\clm_1(\clh(K))$. Then the de Branges-Rovnyak space $\clh(K^\varphi)$ is a CNP space if and only if there exists $\Psi\in H^\infty_1(\D, \clb((\cle\oplus \C), \clf))$
such that 
    \[f(x)=g(x) \psi_1(\varphi(x))\ \text{ and } \varphi(x)=g(x) \psi_2(\varphi(x)) \quad (x\in \Omega),\]
 where for all $z\in\D$, $\Psi(z)= \begin{bmatrix} \psi_1(z) & \psi_2(z)\end{bmatrix}$, 
 $\psi_1(z)\in \clb(\cle,\clf)$ and $\psi_2(z)\in \clb(\C,\clf)$.
\end{Theorem}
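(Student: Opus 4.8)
The plan is to recast the assertion ``$\clh(K^\varphi)$ is a CNP space'' as a Schur-product positivity statement, strip off a scalar Drury--Arveson factor, and then invoke Leech's factorization theorem (Theorem~\ref{Leech_Theo}) together with the Ball--Trent--Vinnikov description of multipliers on complete Nevanlinna--Pick spaces (Theorem~\ref{Theo_BTV}).

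First I would note that $K^\varphi$ is non-vanishing and normalized at $\sfw$: since $|\varphi(x)|<1$ on $\Omega$ we have $|1-\varphi(x)\overline{\varphi(y)}|\ge 1-|\varphi(x)||\varphi(y)|>0$, while $K^\varphi(x,\sfw)=(1-\varphi(x)\cdot 0)K(x,\sfw)=1$. Hence, by the reformulation of Theorem~\ref{MQAM} (a non-vanishing normalized kernel is CNP if and only if $1-\tfrac1K\succeq 0$), $\clh(K^\varphi)$ is a CNP space if and only if $1-\tfrac1{K^\varphi}\succeq 0$. Rewriting the hypothesis as $\tfrac{1}{K(x,y)}=1-g(x)g(y)^*+f(x)f(y)^*$ and computing, one obtains the identity
\[
1-\frac{1}{K^\varphi(x,y)}=\frac{g(x)g(y)^*-\big(f(x)f(y)^*+\varphi(x)\overline{\varphi(y)}\big)}{1-\varphi(x)\overline{\varphi(y)}}=\big(g(x)g(y)^*-h(x)h(y)^*\big)\,\cll(x,y),
\]
where $h:=\begin{bmatrix} f & \varphi\end{bmatrix}:\Omega\to\clb(\cle\oplus\C,\C)$ and $\cll(x,y):=\big(1-\varphi(x)\overline{\varphi(y)}\big)^{-1}$. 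The crucial observation is that $\cll$ is exactly a kernel of the form \eqref{CNP kernel} with $u=\varphi:\Omega\to\clb_1(\C,\C)$ and $u(\sfw)=0$, so $\cll$ is a CNP kernel whose associated Drury--Arveson space (in the sense of Theorem~\ref{Theo_BTV}) is the one on $\mathbb B_1(\C)=\D$, i.e.\ the Hardy space $H^2(\D)$.

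Next I would apply Leech's theorem to the CNP kernel $\cll$: with $g$ playing the role of the function valued in $\clb(\clf,\C)$ and $h$ that of the function valued in $\clb(\cle\oplus\C,\C)$, the positivity $\big(gg^*-hh^*\big)\cll\succeq 0$ holds if and only if there is a multiplier $\Theta\in\clm_1\big(\clh(\cll)\otimes(\cle\oplus\C),\,\clh(\cll)\otimes\clf\big)$ with $h(x)=g(x)\Theta(x)$ on $\Omega$. Then I would feed this $\Theta$ into Theorem~\ref{Theo_BTV} with $u=\varphi$: such a $\Theta$ exists if and only if there is $\Psi$ in $\clm_1\big(\clh(\clk)\otimes(\cle\oplus\C),\clh(\clk)\otimes\clf\big)=H^\infty_1\big(\D,\clb(\cle\oplus\C,\clf)\big)$ with $\Theta(x)=\Psi(\varphi(x))$. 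Writing $\Psi(z)=\begin{bmatrix}\psi_1(z)&\psi_2(z)\end{bmatrix}$ with $\psi_1(z)\in\clb(\cle,\clf)$ and $\psi_2(z)\in\clb(\C,\clf)$, the equation $h(x)=g(x)\Psi(\varphi(x))$ unpacks block by block into $f(x)=g(x)\psi_1(\varphi(x))$ and $\varphi(x)=g(x)\psi_2(\varphi(x))$. Since every step in this chain is an ``if and only if'', both directions of the theorem follow at once; for the converse one simply runs Theorems~\ref{Theo_BTV} and~\ref{Leech_Theo} in reverse.

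I do not expect a serious obstacle, since the two substantial inputs (the Leech factorization and the transfer-function realization of CNP multipliers) are quoted wholesale; the work is in assembling the identity of the first display correctly. The points that need care are: verifying that $K^\varphi$ is non-vanishing and normalized at $\sfw$ so Theorem~\ref{MQAM} applies; checking that $\cll$ is genuinely of the normalized CNP form \eqref{CNP kernel}; and the bookkeeping that, because $\varphi$ is scalar-valued, the auxiliary Hilbert space in Theorem~\ref{Theo_BTV} is $\C$, so the base reproducing kernel Hilbert space is $H^2(\D)$ — which is precisely what makes $\Psi$ an operator-valued bounded holomorphic function on $\D$ rather than a multiplier on a larger Drury--Arveson space.
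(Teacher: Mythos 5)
Your proposal is correct and follows essentially the same route as the paper: rewrite $1-\tfrac{1}{K^\varphi}$ as $(gg^*-hh^*)K_\varphi$ with $h=\begin{bmatrix} f & \varphi\end{bmatrix}$, apply Leech's theorem over the CNP kernel $K_\varphi$, and then use Ball--Trent--Vinnikov with $u=\varphi$ to realize the resulting multiplier as $\Psi\circ\varphi$ with $\Psi\in H^\infty_1(\D,\clb(\cle\oplus\C,\clf))$. The only differences are notational (your $\cll$ is the paper's $K_\varphi$), and your explicit remarks about normalization and the identification with $H^2(\D)$ are exactly the checks implicit in the paper's argument.
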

\begin{proof}
Since $K$ is a non-vanishing and normalized at $\sfw$, the de Branges-Rovnyak kernel $K^\varphi(x,y)=(1-\varphi(x)\overline{\varphi(y)})K(x,y)$ ($x,y\in \Omega$) is non-vanishing and normalized at $\sfw$.  Then, by Theorem~\ref{MQAM}, $K^{\varphi}$ is a CNP kernel if and only if $1-\frac{1}{K^{\varphi}}\succeq 0$.
By the hypothesis 
\[
1-\frac{1}{K(x,y)}=g(x)g(y)^*-f(x)f(y)^*,\quad (x,y\in \Omega)
\]
for some function $f: \Omega\to \clb(\cle, \C)$ and a non-zero function $g:\Omega\to \clb(\clf,\C)$.
Then for all $x,y\in \Omega$,
\begin{align*}
    1-\frac{1}{K^\varphi(x,y)}&=1-\frac{1}{\big(1-\varphi(x)\overline{\varphi(y)}\big)K(x,y)}\\
    &= 1-\frac{1-g(x)g(y)^* +f(x)f(y)^*}{1-\varphi(x)\overline{\varphi(y)}}\\
    &= \frac{g(x)g(y)^*- (\varphi(x)\overline{\varphi(y)}+f(x)f(y)^*)}{1-\varphi(x)\overline{\varphi(y)}}\\
    &=(g(x)g(y)^*-h(x)h(y)^*)K_{\varphi}(x,y)
\end{align*}
where $h:\Omega\to \clb((\cle\oplus \C), \C)$ is defined by $h(x)=\begin{bmatrix} f(x) & \varphi(x) \end{bmatrix}$ for all $x\in\Omega$. Consequently, by Theorem~\ref{Leech_Theo}, $K^{\varphi}$ is a CNP kernel if and only if there exists $\Theta\in\clm_1(\clh(K_\varphi)\otimes (\cle \oplus \C),\clh(K_{\varphi})\otimes \clf)$ such that 
\begin{align}\label{Mul_eq0}
    h(x)=g(x) \Theta(x) \quad (x\in \Omega).
\end{align}
Since $\Theta\in\clm_1(\clh(K_\varphi)\otimes (\cle \oplus \C),\clh(K_\varphi)\otimes \clf)$, by Theorem \ref{Theo_BTV}, there exists $\Psi\in H^\infty_1(\D, \clb((\cle\oplus \C), \clf))$ such that 
\[\Theta(x)=\Psi(\varphi(x)) \quad (x\in \Omega).\]
Let $\Psi(z)=\begin{bmatrix} \psi_1(z) & \psi_2(z)\end{bmatrix}$ for all $z\in\D$. Then $\psi_1\in H^\infty_1(\D, \clb(\cle, \clf))$, $\psi_2\in H^\infty_1(\D, \clb(\C, \clf))$ and the identity ~\eqref{Mul_eq0} yields
\[\begin{bmatrix} f(x) & \varphi(x)\end{bmatrix}=g(x)\begin{bmatrix} \psi_1 (\varphi(x)) & \psi_2(\varphi(x))\end{bmatrix} \quad (x\in \Omega).\]  
The proof now follows by comparing entries of the above block operator matrix identity.
\end{proof}
 
If we take $f=0$ in Theorem~\ref{NCNP}, then $K$ becomes a CNP kernel, and in such a case $\psi_1=0$. Thus it provides the following characterization for de Branges-Rovnyak subspaces of a CNP space to be CNP spaces, which extends the result of Chu obtained in ~\cite{Chu}.   

\begin{Theorem}\label{CNP_gene_kernel} 
Let $K$ be a CNP kernel on $\Omega$ that is normalized at $\sfw\in\Omega$, and let $u:\Omega\to \clb_1(\cle,\C)$ be a function such that $u(\sfw)=0$ and $K=K_u$ according to Theorem~\ref{MQAM}.
Let $\varphi:\Omega\to \D$ be a non-constant function in $\clm_1(\clh(K_u))$ such that $\varphi(\sfw)=0$. Then the de Branges-Rovnyak space $\clh(K_u^\varphi)$ is CNP if and only if there exists a $\Psi\in H^\infty_1(\D,\clb(\C, \cle))$ such that
    \begin{align}\label{cnp_de_cnp}
        \varphi(x)=u(x)\Psi(\varphi(x))\quad (x \in \Omega).
    \end{align}
\end{Theorem}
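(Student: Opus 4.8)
The plan is to derive this as the $f=0$ instance of Theorem~\ref{NCNP}; since the ``defect term'' $f(x)f(y)^*$ then disappears, I would in practice just re-run the chain of equivalences in a streamlined form. First, with $f=0$ the hypothesis of Theorem~\ref{NCNP} reads $1-\frac{1}{K}\succeq 0$, so by the equivalent form of Theorem~\ref{MQAM} the kernel $K$ is CNP; being non-vanishing and normalized at $\sfw$, it is $K=K_u$ for some $u:\Omega\to\clb_1(\cle,\C)$ with $u(\sfw)=0$, and then $1-\frac{1}{K_u(x,y)}=u(x)u(y)^*$. Note that $u\not\equiv 0$, for otherwise $\clh(K_u)$ would be one-dimensional and carry no non-constant multiplier $\varphi$.

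Next I would compute the complete-Nevanlinna--Pick defect of the de Branges--Rovnyak kernel. As in the proof of Theorem~\ref{NCNP}, $K_u^\varphi(x,y)=(1-\varphi(x)\overline{\varphi(y)})K_u(x,y)$ is again non-vanishing and normalized at $\sfw$, so by Theorem~\ref{MQAM} it is a CNP kernel iff $1-\frac{1}{K_u^\varphi}\succeq 0$; and
\[
1-\frac{1}{K_u^\varphi(x,y)}=1-\frac{1-u(x)u(y)^*}{1-\varphi(x)\overline{\varphi(y)}}=\bigl(u(x)u(y)^*-\varphi(x)\overline{\varphi(y)}\bigr)\,K_\varphi(x,y),
\]
where $K_\varphi(x,y)=\frac{1}{1-\varphi(x)\overline{\varphi(y)}}$. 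Since $\varphi(\sfw)=0$ and $\varphi:\Omega\to\D$, the kernel $K_\varphi$ is non-vanishing, normalized at $\sfw$, and $1-\frac{1}{K_\varphi}=\varphi(x)\overline{\varphi(y)}\succeq 0$, so by Theorem~\ref{MQAM} it is a CNP kernel --- indeed it is the kernel $K_{\varphi}$ of \eqref{CNP kernel} attached to the scalar function $\varphi$, the Drury--Arveson kernel over $\mathbb B_1(\C)=\D$. I would then apply Leech's theorem (Theorem~\ref{Leech_Theo}) to $K_\varphi$ with the roles of $g,h$ there played by $u$ and $\varphi$: the right-hand side above is a positive kernel iff there is $\Theta\in\clm_1\bigl(\clh(K_\varphi)\otimes\C,\ \clh(K_\varphi)\otimes\cle\bigr)$ with $\varphi(x)=u(x)\Theta(x)$ on $\Omega$. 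Finally, Theorem~\ref{Theo_BTV} applied to $K_\varphi=K_{\varphi}$ (its auxiliary space being $\C$, so that the ambient Drury--Arveson space is $\clh(\clk)=H^2(\D)$) shows that such a $\Theta$ exists iff $\Theta(x)=\Psi(\varphi(x))$ for some $\Psi\in\clm_1\bigl(H^2(\D)\otimes\C,\ H^2(\D)\otimes\cle\bigr)=H^\infty_1(\D,\clb(\C,\cle))$. Chaining these three equivalences gives precisely \eqref{cnp_de_cnp}, and since each link is a genuine ``if and only if'', the converse is automatic.

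I do not expect a serious obstacle, since the argument is a purely formal assembly of Theorems~\ref{MQAM}, \ref{Leech_Theo} and \ref{Theo_BTV}; the only points needing care are bookkeeping: that $K_u^\varphi$ stays non-vanishing and normalized so that Theorem~\ref{MQAM} applies, and --- the one genuinely substantive identification --- that the scalar multiplier $\varphi$ produces a CNP kernel $K_\varphi$ whose associated Drury--Arveson space collapses to $H^2(\D)$ (this is where $\varphi:\Omega\to\D$ and $\mathbb B_1(\C)=\D$ enter), so that both Leech and Ball--Trent--Vinnikov can be invoked for $K_\varphi$. As an alternative to re-running the argument, one can simply read the statement off from Theorem~\ref{NCNP} by taking its $\cle$ to be $\{0\}$, its $\clf$ to be $\cle$, and $g=u$: then, writing $\Psi=\begin{bmatrix}\psi_1 & \psi_2\end{bmatrix}$, one has $\psi_1=0$, the identity $f=g\,\psi_1(\varphi)$ is vacuous, and $\varphi=g\,\psi_2(\varphi)$ becomes \eqref{cnp_de_cnp} after renaming $\psi_2$ as $\Psi$.
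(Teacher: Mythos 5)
Your proposal is correct and follows essentially the same route as the paper: the paper proves this theorem precisely by setting $f=0$ in Theorem~\ref{NCNP} (forcing $\psi_1=0$), and your streamlined re-run of the MQAM--Leech--Ball--Trent--Vinnikov chain is just that specialization written out, with the same identification of the ambient Drury--Arveson space with $H^2(\D)$. Your remark that $u\not\equiv 0$ (needed since Theorem~\ref{NCNP} assumes $g$ non-zero) is a small bookkeeping point the paper leaves implicit, but otherwise there is nothing to add.
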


\begin{Remark}
The above characterization also holds for multipliers in $\mathcal M_1(\clh(K_u)\otimes \clf, \clh(K_u)\otimes \C)$ for some Hilbert space $\clf$. Indeed, if $\varphi\in \mathcal M_1(\clh(K_u)\otimes \clf, \clh(K_u)\otimes \C)$ with $\varphi(\sfw)=0$ then one can show by the same way that $\clh(K^{\varphi}_u)$ is a CNP space if and only if there exists a $\Psi\in \clm_1(\clh(\clk)\otimes \clf, \clh(\clk)\otimes \cle)$ such that 
\[
\varphi(x)= u(x)\Psi(\varphi(x))\quad (x\in\Omega),
\]
where $\clk$ is the kernel of the Drury-Arveson space on $\mathbb B_1(\clf)$ as in ~\eqref{DA kernel}.
\end{Remark}

We end this section by considering a class of examples for both CNP kernels and non CNP kernels. We begin with the case of CNP kernels. 
\begin{Example}\label{Exam_Dirich_CNP}
Let $u:\Omega\to \D$ be a function on $\Omega$ such that $u(\sfw)=0$ for some $\sfw\in\Omega$. Let 
\[\varphi(x)= \frac{a u(x)}{1-b u(x)} \quad (x\in\Omega)
\]
where $0\neq a,b\in\C$ and  $|a|+|b|\leq1$. Since $\frac{az}{1-bz}\in H^\infty_1(\D)$, it follows from Theorem~\ref{Theo_BTV} that $\varphi\in \mathcal M_1(K_u)$. Clearly, $\varphi(\sfw)=0$. Now consider the linear polynomial $\Psi(z)= a+ bz$ $(z\in\D)$.  Since $|a|+|b|\leq1$, $\Psi $ is in $ H^\infty_1(\D)$. Furthermore, for all $x\in \Omega$
\[
u(x)\Psi(\varphi(x))=u(x)\left(a+\frac{abu(x)}{1-bu(x)}\right )=\varphi(x).
\]
Therefore, by Theorem \ref{CNP_gene_kernel}, the de Branges-Rovnyak space $\clh(K^{\varphi}_u)$ is a CNP space. The linear polynomial $\Psi$ and $\varphi$ determine each other. Indeed, if $u(x)\Psi(\varphi(x))=\varphi(x)$ $(x\in\Omega)$ for some $\varphi\in \mathcal M_1(K_u)$ and $\Psi(z)=a+bz$ then a simple calculation shows that $\varphi(x)=\frac{au(x)}{1-b u(x)}$ for all $x\in\Omega$. 

In particular, if we take $\Omega=\D$, $\sfw=0$ and $u(z)=z$, then 
\[\varphi(z)=\frac{az}{1-bz}\quad  (z\in \D)\] where $0\neq a,b\in\C$ and  $|a|+|b|\leq1$. Therefore, the de Branges-Rovnyak subspace $\clh(K_z^\varphi)$ of $H^2(\D)$ is a CNP space. It should be noted that for particular choices of $a$ and $b$, the de Branges-Rovnyak spaces $\clh(K_z^\varphi)$ are local Dirichlet spaces (\cite{CGR,FKMR}), which are known to be CNP spaces by a result of Shimorin (\cite{Shimo}). 
\end{Example} 

We consider several examples for non-CNP kernels next.

\begin{Example}

\textup{(1)} Let $\Omega$ be a domain, and $u:\Omega\to \D$ be a holomorphic function such that $u(\sfw)=0$ for some $\sfw\in \Omega$. For $0<a<1$, consider the kernel on $\Omega$ 
 \[
 K(x,y)=\frac{1-a^2u(x)\overline{u(y)}}{(1-u(x)\overline{u(y)})^2}\quad (x,y\in\Omega).
\]
The kernel $K$ is a non-vanishing and normalized kernel on $\Omega$. One can show, using Theorem~\ref{MQAM} or invoking Theorem~\ref{NCNPs} below, that $K$ is not a CNP kernel on $\Omega$. If we take $\varphi(x)=u(x)$, then $\varphi$ is an element in $\mathcal M_1(K)$ because $au\in \clm_1(K_{u})$ (by Theorem~\ref{Theo_BTV}) and consequently,  
\[
K^{\varphi}(x,y)= \frac{1-a^2u(x)\overline{u(y)}}{1-u(x)\overline{u(y)}}\succeq 0.
\]
It can be seen from Example~\ref{Exam_Dirich_CNP}, by taking $b=0$, that the de Branges-Rovnyak kernel $K^{\varphi}$ is a CNP kernel. However, we can also use Theorem~\ref{NCNP} to prove the same. Indeed, a simple calculation reveals that 
\[
1-\frac{1}{K(x,y)}=g(x)g(y)^* - f(x)f(y)^*,
\]
where $g:\Omega\to\clb(\mathbb C,\mathbb C)$ and $f:\Omega\to\clb(\ell^2,\mathbb C)$ are given by 
\[
x\mapsto\sqrt{2-a^2}u(x) \ \text{ and } x\mapsto (1-a^2)(u(x)^2,au(x)^3,a^2u(x)^4,\ldots, a^nu(x)^{n+2},\ldots),
\]
respectively. Now we take $\Psi(z)=\begin{bmatrix} \psi_1(z) &\psi_2(z)\end{bmatrix}\in \clb(\ell^2\oplus\mathbb C,\mathbb C)$ where for all $z\in\D$ 
\[\psi_1(z)=\frac{1-a^2}{\sqrt{2-a^2}}(z, az^2, a^2z^3,\ldots, a^nz^{n+1},\ldots)\ \text{ and } \psi_2(z)=\frac{1}{\sqrt{2-a^2}}.\]
Then by a straightforward calculation, $\Psi\in H^{\infty}_1(\D, \clb(\ell^2\oplus\mathbb C,\mathbb C))$,
\[f(x)=g(x)\psi_1(\varphi(x)) \text{ and } \varphi(x)=g(x)\psi_2(\varphi(x))\quad (x\in\Omega).\] 
Thus by Theorem~\ref{NCNP} $K^\varphi$ is a CNP kernel.

\textup{(2)}
We now consider the de Branges-Rovnyak subspace of the Hardy space corresponding to 
\[\theta(z)=\sqrt{1-a^2}(b+acz)z,\quad (z\in\D)\]
where $a,b,c\in\C$ such that $0<a<1$, $c\neq 0$, $|b|+|c|<1$ and $|\frac{b}{ac}|<1$. The kernel of the de Branges-Rovnyak space corresponding to $\theta$ is given by 
\[
K(z,w)=\frac{1-\theta(z)\overline{\theta(w)}}{1-z\bar w}\quad (z,w\in \D).
\]
Since $|\frac{b}{ac}|<1$, $\theta$ has multiple roots in $\D$, and therefore, by Theorem~\ref{CChu}, $K$ is not a CNP kernel on $\D$. However, note that $K$ is a non-vanishing kernel which is normalized at $0$ and 
\[
1-\frac{1}{K(z,w)}=g(z)g(w)^*-f(z)f(w)^*,\quad (z,w\in\D)\]
where $f:\D\to \clb(\ell^2,\C)$ and $g:\D\to \clb(\ell^2,\C)$ are given by
\[f(z)=(\theta(z),\theta(z)^2,\ldots)\ \text{and }g(z)=z(1,\theta(z),\theta(z)^2,\ldots) \quad (z\in \D).
\]
Using our characterization, we show below that the de Branges-Rovnyak subspace $\clh(K^{\varphi})$ of $\clh(K)$ is a CNP space, where $\varphi(z)=az$ for all $z\in\D$. We leave it to the reader to check that $\varphi$ is a multiplier in $\clm_1(\clh(K))$. 

By Theorem \ref{NCNP}, $K^\varphi$ is a CNP kernel if and only if there exists $\Psi\in H^\infty_1(\D,\clb((\ell^2\oplus\mathbb C),\ell^2)$ such that 
\begin{equation}\label{Identity 5}f(z)=g(z)\psi_1(\varphi(z))\ \text{and}\ \varphi(z)=g(z)\psi_2(\varphi(z)),
\end{equation}
where $\Psi(z)=\begin{bmatrix} \psi_1(z) & \psi_2(z)\end{bmatrix}$ and $\psi_1(z) \in \clb(\ell^2,\ell^2)$, $\psi_2(z) \in \clb(\mathbb C,\ell^2)$ for all $z\in\D$.
A straightforward calculation shows that the identity~\eqref{Identity 5} holds with 
\[
\psi_1(z)=\begin{bmatrix}
    \varphi_1(z) & 0 & 0 & \cdots\\
    0 & \varphi_1(z) & 0 &\cdots\\
    0 & 0 & \ddots & \\
    \vdots & \vdots & 
\end{bmatrix}, \quad (z\in\D)
\]
where $\varphi_1(z)=\sqrt{1-a^2}(b+cz)$ and  
$$
\psi_2(z)=\begin{bmatrix}
    a\\0\\ \vdots
\end{bmatrix}\ \quad (z\in\D).
$$
This shows that $K^{\varphi}$ is a CNP kernel.

More generally, for any function $u:\Omega\to \D$ with $u(\sfw)=0$ for some $\sfw\in\Omega$ and 
\[\theta(x)=\sqrt{1-a^2}(b+acu(x))u(x),\quad (x\in\Omega)
\]
 where $a,b,c$ as above, one can show by the same way that $K^{\varphi}$ is a CNP kernel on $\Omega$ where 
\[
K(x,y)=\frac{1-\theta(x)\overline{\theta(y)}}{1-u(x)\overline{u(y)}}\quad (x,y\in \Omega)
\]
and $\varphi(x)=au(x)$ for all $x\in\Omega$.
\end{Example}

\section{Two Applications}\label{Sec 3}
In this section, we apply Theorem~\ref{NCNP} for tensor products of CNP kernels and Schur product of two CNP kernels. The Hardy space over polydisc and the Bergman space over the disc emerge as a particular case.

\subsection*{Tensor product of CNP kernels}
\textit{For the rest of the paper, we assume that $\Omega$ is a domain}. Let $K_1$ and $K_2$ be kernels on $\Omega$. Then the tensor product of $K_1$ and $K_2$ is a kernel on $\Omega\times \Omega$ denoted by $K_1\otimes K_2$ and defined as 
   \[
   (K_1\otimes K_2)((x_1,x_2), (y_1,y_2))= K_1(x_1,y_1)K_2(x_2,y_2)\quad (x_1,x_2,y_1,y_2\in\Omega).
   \]
   The reproducing kernel Hilbert space $\clh(K_1\otimes K_2)$ corresponding to $K_1\otimes K_2$ can be identified as the Hilbert space tensor product $\clh(K_1)\otimes \clh(K_2)$, which is perhaps the reason why we call the kernel as tensor product of kernels. For more details see ~\cite[Theorem 5.11]{PaulRaghu}. We now consider tensor product of two holomorphic CNP kernels and determine completely when a de Branges-Rovnyak subspace is a CNP space. This, in particular, provides a characterization for the Hardy space over the bidisc.

\begin{Theorem}\label{NCNPm}
    Let $u: \Omega\to \D$ and $v:\Omega\to \D$ be non-constant holomorphic functions such that $u(\sfw_1)=0=v(\sfw_2)$ for some $\mathbf w=(\sfw_1,\sfw_2)\in\Omega^2$. Consider the kernel  $K=K_{u}\otimes K_{v}$ on $\Omega \times \Omega$. Assume that $\varphi: \Omega\times\Omega\to \D$ is a non-constant function in $\clm_1(\clh(K))$ with $\varphi(\mathbf w)=0$. Then the de Branges-Rovnyak space $\clh(K^\varphi)$ is a CNP space if and only if either $\varphi(x,y)=\lambda u(x)$ or $\varphi(x,y)=\lambda v(y)$ $((x,y)\in \Omega \times \Omega)$, for some $\lambda\in \mathbb{T}$.
\end{Theorem}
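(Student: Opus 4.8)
The plan is to apply Theorem~\ref{NCNP} to $K=K_u\otimes K_v$, transport the two functional identities it produces to the bidisc, and read off a rigidity statement for holomorphic self-maps of $\D$. For sufficiency: if $\varphi(x,y)=\lambda u(x)$ with $\lambda\in\mathbb T$, then as kernels on $\Omega\times\Omega$
\[
K^\varphi\big((x,y),(x',y')\big)=\big(1-|\lambda|^2 u(x)\overline{u(x')}\big)K_u(x,x')K_v(y,y')=K_v(y,y'),
\]
which is non-vanishing, normalized at $\mathbf w$, and satisfies $1-\frac1{K^\varphi}=v(y)\overline{v(y')}\succeq 0$; hence $K^\varphi$ is CNP by Theorem~\ref{MQAM}. (Also $\lambda u\in\clm_1(\clh(K))$, since $M_{\lambda u}\otimes I_{\clh(K_v)}$ is a contraction, using Theorem~\ref{Theo_BTV} for $\lambda u\in\clm_1(\clh(K_u))$.) The case $\varphi(x,y)=\lambda v(y)$ is symmetric.

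For necessity, suppose $\clh(K^\varphi)$ is CNP. Since $\frac1K=(1-u(x)\overline{u(x')})(1-v(y)\overline{v(y')})$, one gets $1-\frac1K=gg^*-ff^*$ with $g(x,y)=\begin{bmatrix}u(x)&v(y)\end{bmatrix}$ (nonzero, as $u,v$ are non-constant) and $f(x,y)=u(x)v(y)$, so $\cle=\C$, $\clf=\C^2$. By Theorem~\ref{NCNP} there are $p,q,r,s\in H^\infty_1(\D)$ with $\Psi=\begin{bmatrix}p&r\\ q&s\end{bmatrix}\in H^\infty_1(\D,\clb(\C^2))$ and, writing $\varphi=\varphi(x,y)$,
\[
u(x)v(y)=u(x)p(\varphi)+v(y)q(\varphi),\qquad \varphi=u(x)r(\varphi)+v(y)s(\varphi)\qquad(x,y\in\Omega).
\]
Now $\varphi=\varphi\cdot 1\in\clh(K_u\otimes K_v)$, and the pullback $F\mapsto F\big(u(\cdot),v(\cdot)\big)$ identifies $H^2(\D^2)$ with $\clh(K_u\otimes K_v)$ unitarily (it is the standard pullback co-isometry, and injective since $u,v$ are non-constant holomorphic on the domain $\Omega$, hence open maps), carrying $M_G$ to $M_\varphi$; so $\varphi(x,y)=G\big(u(x),v(y)\big)$ with $G\in H^\infty_1(\D^2)$ non-constant, $G(0,0)=\varphi(\mathbf w)=0$. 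Both sides of the two identities are holomorphic in $(u(x),v(y))$, so by the identity theorem on $\D^2$,
\[
\zeta_1\zeta_2=\zeta_1 p(G)+\zeta_2 q(G),\qquad G=\zeta_1 r(G)+\zeta_2 s(G)\qquad\big((\zeta_1,\zeta_2)\in\D^2,\ G=G(\zeta_1,\zeta_2)\big).
\]

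For the rigidity step, put $g_1(\zeta)=G(\zeta,0)$, $g_2(\zeta)=G(0,\zeta)$; both vanish at $0$. Setting $\zeta_1=0$ (resp.\ $\zeta_2=0$) in the first identity gives $q\circ g_2\equiv 0$ (resp.\ $p\circ g_1\equiv 0$), so $g_2$ non-constant forces $q\equiv 0$ and $g_1$ non-constant forces $p\equiv 0$. Both non-constant is impossible (the first identity becomes $\zeta_1\zeta_2\equiv 0$); both constant (hence $\equiv 0$) is also impossible (then $G=\zeta_1\zeta_2\widehat G$ and, noting the first identity forces $p(0)=q(0)=0$, it collapses to $1\equiv\widehat G\,(\zeta_1\widehat p(G)+\zeta_2\widehat q(G))$ on $\D^2$, which fails at the origin unless $\widehat G\equiv 0$, i.e.\ unless $\varphi$ is constant). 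So exactly one of $g_1,g_2$ is non-constant; by the symmetry of the identities under $\zeta_1\leftrightarrow\zeta_2$, $p\leftrightarrow q$, $r\leftrightarrow s$, assume it is $g_2$, so $g_1\equiv 0$ and $q\equiv 0$. Then the first identity reads $\zeta_1\big(\zeta_2-p(G)\big)\equiv 0$ on the connected set $\D^2$, hence $p(G(\zeta_1,\zeta_2))=\zeta_2$. For each fixed $w\in\D$ the holomorphic map $\zeta_1\mapsto G(\zeta_1,w)$ takes values in the zero set of $p-w$, which is discrete (as $p$ is non-constant), and is therefore constant; thus $G(\zeta_1,\zeta_2)=g_2(\zeta_2)$. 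Substituting into the second identity and differentiating in $\zeta_1$ gives $r\circ g_2\equiv 0$, so $r\equiv 0$, $g_2(\zeta)=\zeta\,s(g_2(\zeta))$, and $p\circ g_2=\mathrm{id}_\D$. Since $g_2,p\in H^\infty_1(\D)$ with $g_2(0)=p(0)=0$, Schwarz's lemma gives $|\zeta|=|p(g_2(\zeta))|\le|g_2(\zeta)|\le|\zeta|$, forcing $g_2(\zeta)=\lambda\zeta$, $\lambda\in\mathbb T$, i.e.\ $\varphi(x,y)=\lambda v(y)$; the reflected case gives $\varphi(x,y)=\lambda u(x)$.

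The two delicate points are (i) the reduction $\varphi(x,y)=G(u(x),v(y))$ with $G$ a \emph{contractive} multiplier of $H^2(\D^2)$ — i.e.\ checking that the pullback identification $\clh(K_u\otimes K_v)\cong H^2(\D^2)$ is unitary and intertwines $M_\varphi$ with $M_G$ — and (ii) the bookkeeping in the case analysis that eliminates the degenerate configurations of $g_1,g_2$. The crucial new idea is the ``discrete fibre'' observation: once $q\equiv 0$ forces $p\circ G=\zeta_2$, each slice $G(\cdot,w)$ is confined to a discrete set and hence constant, which is what collapses $\varphi$ to a function of a single coordinate.
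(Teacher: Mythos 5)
Your proof is correct, and after the shared opening it takes a genuinely different route from the paper's. Both arguments begin identically: the decomposition $1-\tfrac{1}{K}=gg^*-ff^*$ with $g=\begin{bmatrix}u&v\end{bmatrix}$, $f=uv$, an application of Theorem~\ref{NCNP}, and the same pair of functional identities. The paper then stays on $\Omega$: it studies the slice functions $\varphi(\cdot,\sfw_2)$ and $\varphi(\sfw_1,\cdot)$, shows exactly one is identically zero (open mapping theorem plus a Schwarz-lemma substitution), and concludes from an identity of the form $\tilde{\psi}_{21}(\varphi(x,\sfw_2))\,\psi_{12}(\varphi(x,\sfw_2))=1$ that both factors are unimodular constants, whence $\varphi=\bar{\lambda}u$ or $\beta v$. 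You instead transport the problem to the bidisc: the pullback $F\mapsto F(u(\cdot),v(\cdot))$ is a coisometry of $H^2(\D^2)$ onto $\clh(K_u\otimes K_v)$, injective because $u(\Omega)\times v(\Omega)$ is open, hence unitary and intertwining $M_G$ with $M_\varphi$; this yields $\varphi=G(u,v)$ with $G\in H^\infty_1(\D^2)$, and your case analysis on $G(\cdot,0)$, $G(0,\cdot)$ together with the discrete-fibre observation ($p\circ G=\zeta_2$ with $p$ non-constant confines each slice $G(\cdot,w)$ to a discrete set, hence makes it constant) collapses $G$ to one variable, after which Schwarz finishes. The one step you compress is the intertwining claim: to get $\|G\|_\infty\le 1$ you should note that for $F\in H^2(\D^2)$ the product $GF$ lies again in $H^2(\D^2)$ — surjectivity of the pullback gives $H\in H^2(\D^2)$ with $H(u,v)=\varphi\cdot F(u,v)$, and the identity theorem on the open set $u(\Omega)\times v(\Omega)$ gives $H=GF$ — so $M_G$ is unitarily equivalent to $M_\varphi$ and hence contractive; this is routine but genuinely needed, since $|G|<1$ on $u(\Omega)\times v(\Omega)$ alone would not bound $G$ on $\D^2$, and that bound is used both to define $p\circ G$, $s\circ G$ globally and in the final Schwarz step. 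As for what each approach buys: the paper's slice argument avoids all identification machinery and runs verbatim on $\Omega$, while yours reduces the general statement to the model case of $\cls_2$ (so Theorem~\ref{Hardy space over the bidisc} is effectively proved en route) and makes the rigidity analysis on $\D^2$ quite transparent.
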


\begin{proof} 
For $(x_1,x_2), (y_1,y_2)\in\Omega\times \Omega$,
\begin{align*}
    1-\frac{1}{ K((x_1, x_2),(y_1,y_2))} &=u(x_1)u(y_1)^*+ v(x_2)v(y_2)^*- u(x_1)v(x_2)u(y_1)^*v(y_2)^*\\
    & =g(x_1, x_2) g(y_1,y_2)^*-f(x_1,x_2)f(y_1,y_2)^*,
\end{align*}
where $f(x_1,x_2)= u(x_1)v(x_2)\in \clb(\C) $ and 
$g(x_1,x_2)= \begin{bmatrix} u(x_1) & v(x_2)\end{bmatrix}\in \clb(\C^2, \C)$.
Then by Theorem \ref{NCNP}, $K^{\varphi}$ is a CNP kernel if and only if there exists $\Psi\in H^\infty_1(\D, \clb(\C^2))$
such that for all $(x,y)\in\Omega\times \Omega$
\begin{align}\label{Iden2}
    u(x)v(y)=u(x) \psi_{11}(\varphi(x,y)) + v(y) \psi_{21}(\varphi(x,y))
\end{align}
and 
\begin{align}\label{Iden3}
    \varphi(x,y)=u(x) \psi_{12}(\varphi(x,y))+ v(y) \psi_{22}(\varphi(x,y)), 
\end{align}
where $\psi_{ij}\in H_1^\infty(\D)$ ($i,j=1,2$) and 
\[\Psi(z)=\begin{bmatrix} \psi_{11}(z) & \psi_{12}(z)\\
\psi_{21}(z) & \psi_{22}(z)\end{bmatrix}\] 
for all $z\in\D$.
We now show that $\varphi$ satisfies ~\eqref{Iden2} and ~\eqref{Iden3} if and only if $\varphi(x,y)=\lambda u(x)$ or $\varphi(x,y)=\lambda v(y)$ for some $\lambda\in \mathbb{T}$. For one implication, note that if $\varphi(x,y)=\lambda u(x)$, then 
\[
K^{\varphi}((x_1,x_2),(y_1,y_2))=\frac{1}{1-v(x_2)\overline{v(y_2)}}, \quad ((x_1,x_2),(y_1,y_2)\in\Omega\times \Omega)
\]
which is a CNP kernel by Theorem~\ref{MQAM}. Therefore, $\varphi$ satisfies  ~\eqref{Iden2} and ~\eqref{Iden3}. The case when $\varphi(x,y)=\lambda v(y)$ is similar. 

For the other implication, we now consider the slice functions $\varphi(.,\sfw_2)$ and $\varphi(\sfw_1, .)$ on $\Omega$ defined as 
\[\varphi(., \sfw_2 )(x)=\varphi(x,\sfw_2)\ \text{ and } \varphi(\sfw_1, .)(x)=\varphi(\sfw_1,x) \quad (x\in\Omega).
\]
\textbf{Claim:} Exactly one of the slice functions $\varphi(.,\sfw_2)$ and $\varphi(\sfw_1, .)$ is identically zero.

\textit{Proof of the claim.} First we show that both $\varphi(.,\sfw_2)$ and $\varphi(\sfw_1, .)$ can not be non-zero functions. For the sake of contradiction suppose both are non-zero. Since $u$ and $v$ are non-zero holomorphic functions on $\Omega$, from ~\eqref{Iden2} we have
\[\psi_{11}(\varphi(x,\sfw_2))=0\quad \text{and}\quad \psi_{21}(\varphi(\sfw_1,x))=0\]
for all $x\in\Omega$. Then by open mapping theorem, both $\psi_{11}$ and $\psi_{21}$ are identically zero on $\D$. 
Then by ~\eqref{Iden2}, $u(x)v(y)=0$ for all $(x,y)\in\Omega$, which is a contradiction. Now, we show that both $\varphi(.,\sfw_2)$ and $\varphi(\sfw_1, .)$ can not be identically zero. Again for the contradiction suppose that both are identically zero. Then from \eqref{Iden2} and \eqref{Iden3}, we get $\psi_{ij}(0)=0$ for all $i,j=1,2$. Consequently, by Schwarz's lemma, $\psi_{ij}(z)=z \tilde{\psi}_{ij}(z)$ $(z\in\D)$ for some $\tilde{\psi}_{ij}\in H_1^\infty(\D)$. 
Therefore, by \eqref{Iden3}, we have for all $(x,y)\in \Omega\times\Omega$,
\[\varphi(x,y)=u(x) \varphi(x,y) \tilde{\psi}_{12}(\varphi(x,y))+ v(y) \varphi(x,y) \tilde{\psi}_{22}(\varphi(x,y)).\]
Since $\varphi$ is a non-zero holomorphic function, we also have
\[u(x)  \tilde{\psi}_{12}(\varphi(x,y))+ v(y) \tilde{\psi}_{22}(\varphi(x,y)) =1 \]
for all $(x,y)\in\Omega\times \Omega$, which is a contradiction as the left hand side vanishes at $\mathbf w=(\sfw_1,\sfw_2)$. This completes the proof of the claim. 

 Let us now assume that the function $\varphi(\sfw_1,.)$ is identically zero on $\Omega$. In such a case, we show that $\varphi$ has the form $\varphi(x,y)=\lambda u(x)$ for some $\lambda\in \mathbb{T}$. Because of the above claim, $\varphi(.,\sfw_2)$ is a non-zero function on $\Omega$, and therefore by ~\eqref{Iden2} $\psi_{11}(\varphi(x,\sfw_2))=0$ for all $x\in\Omega$. This implies $\psi_{11}$ is identically zero on $\D$. By taking $x=\sfw_1$ in ~\eqref{Iden2}, we also have $\psi_{21}(0)=0$. Then $\psi_{21}(z)=z\tilde{\psi}_{21}(z)$ $(z\in\D)$ for some $\tilde{\psi}_{21}\in H^{\infty}_1(\D)$ and ~\eqref{Iden2} now becomes
\begin{align}\label{Iden4}
\varphi(x,y) \tilde{\psi}_{21}(\varphi(x, y))=u(x) 
\end{align}
for all $(x,y)\in \Omega \times \Omega$.
On the other hand, taking $y=\sfw_2$ in ~\eqref{Iden3} and using ~\eqref{Iden4}, we have
\begin{align*}
    \varphi(x,\sfw_2)=u(x)\psi_{12}(\varphi(x,\sfw_2))=\varphi(x,\sfw_2)\tilde{\psi}_{21}(\varphi(x,\sfw_2))\psi_{12}(\varphi(x,\sfw_2)),
\end{align*}
for all $x\in\Omega$. Since $\varphi(.,\sfw_2)$ is a non-zero holomorphic function, 
\[
\tilde{\psi}_{21}(\varphi(x,\sfw_2))\psi_{12}(\varphi(x,\sfw_2))=1
\]
for all $x\in \Omega$. Since both $\tilde{\psi}_{21}$ and $\psi_{12}$ are contractive analytic functions on $\D$, the above identity is possible only if $\tilde{\psi}_{21}(z)=\lambda$ and $\psi_{12}(z)=\overline{\lambda}$ for all $z\in\D$, where $\lambda\in\mathbb{T}$. Hence from \eqref{Iden4}, we have $\varphi(x,y)=\overline{\lambda}u(x)$ for all $(x,y)\in \Omega$. Similarly, if we assume that $\varphi(.,\sfw_2)$ is identically zero on $\Omega$ then it would force $\varphi$ to be of the form $\varphi(x,y)=\beta v(y)$ for all $(x,y)\in \Omega$, where $\beta\in \mathbb{T}$. This completes the proof.   
\end{proof}

We now apply the above theorem to the Hardy space over the bidisc and prove Theorem~\ref{Hardy space over the bidisc}.

\textit{Proof of Theorem~\ref{Hardy space over the bidisc}:} Let $\varphi: \D^2\to \D$ be a holomorphic function such that $\varphi(0,0)=\mu\in\D$. Then $b_{\mu}\circ \varphi$ vanishes at the origin, where $b_{\mu}(z)=\frac{z-\mu}{1-\bar{\mu}z}$ is the automorphism of $\D$ corresponding to $\mu\in \D$. 
In view of Proposition~\ref{Iso_CNPs}, the de Branges-Rovnyak space $\clh(\cls_2^{\varphi})$ is a CNP space if and only if  $\clh(\cls_2^{b_{\mu}\circ\varphi})$ is a CNP space. By Theorem~\ref{NCNPm}, $\clh(\cls_2^{b_{\mu}\circ\varphi})$ is a CNP space if and only if either $b_{\mu}\circ \varphi(z_1,z_2)=\lambda z_1 $ or $b_{\mu}\circ \varphi(z_1,z_2)=\lambda z_2 $ for some $\lambda\in \mathbb{T}$. The proof now follows. 
\qed 

The case of tensor products of three or more CNP kernels is significantly different, as we show that there is no non-trivial de Branges-Rovnyak space which is a CNP space. We consider tensor product of three CNP kernels first and then the general case is treated by an inductive argument.  
\begin{Theorem}\label{three kernels}
Let $\ft, \fu, \fv: \Omega\to \D$ be non-constant holomorphic functions such that $\ft(\sfw_1)=\fu(\sfw_2)=\fv(\sfw_3)=0$ for some $\mathbf w=(\sfw_1,\sfw_2,\sfw_3)\in \Omega^3$. Consider the kernel  $K=K_{\ft}\otimes K_{\fu}\otimes K_\fv$ on $\Omega \times \Omega\times \Omega$. There is no non-trivial de Branges-Rovnyak subspace of $\clh(K)$ which is a CNP space. 
\end{Theorem}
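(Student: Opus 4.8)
The plan is to argue by contradiction, mimicking the proof of Theorem~\ref{NCNPm} but with more slices to juggle. Suppose $\clh(K^\varphi)$ is a CNP space for some non-constant multiplier $\varphi\in\clm_1(\clh(K))$; by Proposition~\ref{Iso_CNPs} we may take $\varphi(\mathbf w)=0$. Expanding $(1-\ft(x_1)\overline{\ft(y_1)})(1-\fu(x_2)\overline{\fu(y_2)})(1-\fv(x_3)\overline{\fv(y_3)})$ and collecting terms shows that, with $a=\ft(x_1)$, $b=\fu(x_2)$, $c=\fv(x_3)$,
\[
1-\frac{1}{K}=g(x)g(y)^*-f(x)f(y)^*,\qquad g(x)=\begin{bmatrix} a & b & c & abc\end{bmatrix},\quad f(x)=\begin{bmatrix} ab & bc & ca\end{bmatrix},
\]
so $g:\Omega^3\to\clb(\C^4,\C)$ and $f:\Omega^3\to\clb(\C^3,\C)$. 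Theorem~\ref{NCNP} then yields a contractive holomorphic matrix function $\Psi=[\psi_{ij}]_{i,j=1}^4\in H^\infty_1(\D,\clb(\C^4))$ such that, writing $\varphi$ for $\varphi(x_1,x_2,x_3)$,
\[
(f(x))_j=a\,\psi_{1j}(\varphi)+b\,\psi_{2j}(\varphi)+c\,\psi_{3j}(\varphi)+abc\,\psi_{4j}(\varphi)\quad(j=1,2,3),
\]
\[
\varphi=a\,\psi_{14}(\varphi)+b\,\psi_{24}(\varphi)+c\,\psi_{34}(\varphi)+abc\,\psi_{44}(\varphi)
\]
on $\Omega^3$, where $(f(x))_1=ab$, $(f(x))_2=bc$, $(f(x))_3=ca$.

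The combinatorial heart is a \emph{Key Claim}: if $\varphi$ vanishes identically on one of the coordinate hyperplanes $H_k=\{x_k=\sfw_k\}$, then we already have a contradiction. For instance, if $\varphi\equiv0$ on $H_1$, then on $H_1$ one has $a=0$ and $\psi_{i2}(\varphi)=\psi_{i2}(0)$, so the $j=2$ identity reduces to $bc=b\,\psi_{22}(0)+c\,\psi_{32}(0)$; putting $x_3=\sfw_3$ forces $\psi_{22}(0)=0$ and then $x_2=\sfw_2$ forces $\psi_{32}(0)=0$, whence $\fu(x_2)\fv(x_3)\equiv0$ --- impossible, since $\fu$ and $\fv$ are non-constant. (For $H_2$ use the $j=3$ identity, for $H_3$ the $j=1$ identity.) So it suffices to locate an index $k$ with $\varphi|_{H_k}\equiv0$.

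To that end, first restrict the three ``product'' identities to hyperplanes. Setting $x_3=\sfw_3$ in the $j=1$ identity gives $ab=a\,\psi_{11}(\varphi|_{H_3})+b\,\psi_{21}(\varphi|_{H_3})$, and letting in addition $x_2\to\sfw_2$, resp.\ $x_1\to\sfw_1$, yields $\psi_{11}(\varphi_1(x_1))\equiv0$, resp.\ $\psi_{21}(\varphi_2(x_2))\equiv0$, where $\varphi_i$ is the coordinate-axis slice of $\varphi$ through $\mathbf w$ in the $i$-th variable. Hence if $\varphi_1$ and $\varphi_2$ were both non-constant, the open mapping and identity theorems would give $\psi_{11}\equiv\psi_{21}\equiv0$ on $\D$, so $ab\equiv0$ on $H_3$, a contradiction. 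Running the same argument with the $j=2$ identity on $H_1$ and the $j=3$ identity on $H_2$, we conclude that \emph{at most one} of $\varphi_1,\varphi_2,\varphi_3$ is non-constant; since $\varphi_i(\sfw_i)=0$, being constant means being $\equiv0$, and after relabelling $\ft,\fu,\fv$ we may assume $\varphi_2\equiv\varphi_3\equiv0$. Now $\varphi|_{H_1}$, viewed on $\Omega^2$, is a contractive multiplier of $K_\fu\otimes K_\fv$ because $K^\varphi|_{H_1}=(K_\fu\otimes K_\fv)^{\varphi|_{H_1}}\succeq0$ (using $K_\ft(\sfw_1,\sfw_1)=1$), and $\clh\big((K_\fu\otimes K_\fv)^{\varphi|_{H_1}}\big)=\clh(K^\varphi|_{H_1})$ is a CNP space, being the reproducing kernel Hilbert space of a restriction of the CNP kernel $K^\varphi$ (via the representation in Theorem~\ref{MQAM}). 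Its two coordinate-axis slices are $\varphi_2$ and $\varphi_3$, both identically $0$. If $\varphi|_{H_1}$ were non-constant, Theorem~\ref{NCNPm} would force $\varphi|_{H_1}=\nu\fu(x_2)$ or $\varphi|_{H_1}=\nu\fv(x_3)$ for some $\nu\in\mathbb T$, contradicting the vanishing of $\varphi_2$, resp.\ $\varphi_3$; hence $\varphi|_{H_1}$ is constant, and $\equiv0$ since $\varphi(\mathbf w)=0$. The Key Claim now gives the desired contradiction.

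The step I expect to be the main obstacle is the bookkeeping in the last paragraph --- keeping straight which axis slice $\varphi_i$ occurs as a slice of which hyperplane restriction $\varphi|_{H_k}$, and checking at each use of Theorem~\ref{NCNPm} that the restriction concerned is genuinely non-constant, so that its only options are ``$\equiv0$'' and ``a unimodular multiple of one of the two surviving coordinate functions''. Everything else reduces to routine manipulation of the four scalar identities from Theorem~\ref{NCNP}, together with the elementary fact, used three times, that since $\ft,\fu,\fv$ are non-constant, a product such as $\fu(x_2)\fv(x_3)$ cannot equal $\alpha\,\fu(x_2)+\beta\,\fv(x_3)$ for constants $\alpha,\beta$.
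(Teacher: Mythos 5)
Your proof is correct, and although it runs on the same machinery as the paper's argument --- the identical decomposition $1-\tfrac{1}{K}=g g^*-f f^*$ with $g=(a,b,c,abc)$, $f=(ab,bc,ca)$, Theorem~\ref{NCNP} to produce the four scalar identities, and a reduction to the two-variable Theorem~\ref{NCNPm} --- your case analysis and endgame are genuinely different. The paper slices by two-variable hyperplane restrictions, shows that not all of them vanish and not all are non-zero, and in the mixed case pins $\varphi$ down globally as $\lambda\fu(x_2)$ via an extremality argument ($\psi_{22}\equiv\lambda\in\mathbb T$ forces the remaining entries of the column $\psi_2$ to vanish), after which it must still check by hand, with an explicit $2\times 2$ Pick matrix, that $K_\ft\otimes K_\fv$ is not a CNP kernel. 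Your Key Claim strengthens the paper's Claim II: a single vanishing hyperplane restriction already contradicts the Leech identities, since $\fu(x_2)\fv(x_3)=\fu(x_2)\psi_{22}(0)+\fv(x_3)\psi_{32}(0)$ is impossible for non-constant $\fu,\fv$; so once $\varphi|_{H_1}\equiv 0$ is established you are done, with no extremality step and no Pick-matrix computation. The price is that you must know $\varphi|_{H_1}$ is a contractive multiplier of $K_\fu\otimes K_\fv$ whose de Branges-Rovnyak space is CNP, which you get from the standard fact (correctly justified via Theorem~\ref{MQAM}) that restrictions of CNP kernels are CNP, whereas the paper obtains the analogous statement by restricting its identities \eqref{CNP_Id_Poly}. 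Your intermediate step that at most one coordinate-axis slice is non-constant (the three product identities restricted to the three hyperplanes, plus the open mapping and identity theorems) is sound, as is the final application of Theorem~\ref{NCNPm} to $\varphi|_{H_1}$, so both branches (constant versus non-constant restriction) terminate in a contradiction as claimed.
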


\begin{proof}
For the sake of contradiction, assume that $\varphi:\Omega^3 \to \D$ is a non-constant holomorphic function in $\clm_1(\clh(K))$ such that $\clh(K^{\varphi})$ is a CNP space. By Proposition~\ref{Iso_CNPs}, we also assume that $\varphi(\mathbf w)=0$. For $\x=(x_1, x_2, x_3),\mathbf y=(y_1, y_2, y_3)\in\Omega^3$, note that 
\[
1-\frac{1}{K(\x,\mathbf y)}= g(\x)g(\mathbf y)^*-f(\x)f(\mathbf y)^*,
\]
where $g:\Omega^3\to \clb(\C^4,\C)$ and $f:\Omega^3\to \clb(\C^3,\C)$ are given by 
\[
g(x_1,x_2,x_3)=(\ft(x_1),\fu(x_2),\fv(x_3), \ft(x_1)\fu(x_2)\fv(x_3))
\]
and 
\[f(x_1,x_2,x_3)=(\ft(x_1)\fu(x_2), \fu(x_2)\fv(x_3), \ft(x_1)\fv(x_3))
\]
for all $x_1,x_2,x_3\in\Omega$. Then by Theorem~\ref{NCNP}, there exists $\Psi\in H^{\infty}_1(\D, \clb(\C^4))$ such that
\begin{align}\label{CNP_Id_Poly}
f(\x)=g(\x) \psi_1(\varphi(\x))\quad \text{and} \quad \varphi(\x)=g(\x) \psi_2(\varphi(\x)) \quad (\x\in \Omega^3),
 \end{align} 
where for all $z\in\D$, $\Psi(z)=\begin{bmatrix} \psi_1(z)& \psi_2(z)\end{bmatrix}$, $\psi_1(z)\in\clb(\C^3,\C^4)$ and $\psi_2(z)\in \clb(\C,\C^4)$. We denote the slice functions of $\varphi$ by $\varphi_1,\varphi_2,\varphi_3: \Omega^2\to \D$ which are defined as 
\[
\varphi_1(x_2,x_3)=\varphi(\sfw_1, x_2,x_3), \, \varphi_2(x_1,x_3)=\varphi(x_1,\sfw_2,x_3), \text{ and }
\varphi_3(x_1,x_2)=\varphi(x_1,x_2,\sfw_3)
\]
for $x_1,x_2, x_3\in\Omega$. 
Then the crucial observation is that if any of the slice functions is non-zero, say $\varphi_1$, then the de Branges-Rovnyak subspace $\clh((K_\fu\otimes K_{\fv})^{\varphi_1})$ of $\clh(K_\fu\otimes K_{\fv})$ is a CNP space. This follows from ~\eqref{CNP_Id_Poly} and the observation that $\varphi_1$ is an element of $\clm_1(K_\fu\otimes K_{\fv})$. Thus if $\varphi_1$ is non-zero, by Theorem~\ref{NCNPm}, 
\[
\varphi_1(x_2,x_3)=\varphi(\sfw_1,x_2,x_3)=\lambda \fu(x_2) \text{ or } \lambda \fv(x_3)\quad (x_2,x_3\in\Omega).
\]
for some $\lambda\in \mathbb{T}$. We infer similar conclusions if $\varphi_2$ and $\varphi_3$ are non-zero. Next we show that at least one of the slice functions is non-zero and at least one of the slice functions is identically zero. This is a consequence of two claims below. 

\textbf{Claim I:} Not all of the slice functions can be non-zero.

\textit{Proof of claim I.} Suppose on the contrary that all the slice functions are non-zero. Then by the discussion prior to the claim, we know that $\varphi_i$ depends only on one variable for all $i=1,2,3$. This can not happen simultaneously. We show this by making a particular choice, and leave the rest to the reader as the proof is similar.   Let us assume that 
\[
 \varphi_1(x_2,x_3)=\lambda \fu(x_2), \ \varphi_2(x_1,x_3)=\beta \fv (x_3)\ \text{ and } \varphi_3(x_1,x_2)= \gamma \ft(x_1)\quad (x_1,x_2,x_3\in\Omega).\]
Then $\varphi_2(\sfw_1,x_3)=\varphi(\sfw_1,\sfw_2,x_3)=\varphi_1(\sfw_2,x_3)=0$ for all $x_3\in\Omega$. Therefore by the choice of slice function $\varphi_2$, we have $\fv$ vanishes identically, which is a contradiction.  

\textbf{Claim II:} All the slice functions are not identically zero.

 \textit{Proof of claim II.} Suppose all the $\varphi_i$'s are identically zero. Expanding the first identity in ~\eqref{CNP_Id_Poly}, we get  
\begin{align}\label{CNP_Id_Poly3}
\ft(x_1)\fu(x_2)=\ft(x_1) \psi_{11}(\varphi(\x)) + \fu(x_2) \psi_{21}(\varphi(\x)) + \fv(x_3)\psi_{31}(\varphi(\x)) + \ft(x_1)\fu(x_2)\fv(x_3)\psi_{41}(\varphi(\x))
\end{align}
for all $\x=(x_1,x_2,x_3)\in\Omega^3$, where the first column for $\psi_1(z)$ is
\[\begin{bmatrix} \psi_{11}(z)& \psi_{21}(z) & \psi_{31}(z) & \psi_{41}(z)\end{bmatrix}^T\ \text{ and } \psi_{i1}(z)\in\clb(\C)\] 
for all $z\in\D$ and $i=1,\dots,4$. Now taking $x_1=\sfw_1$, $x_3=\sfw_3$ in \eqref{CNP_Id_Poly3} and using the fact that $\varphi(\sfw_1, x_2,\sfw_3)=0$ for any $x_2\in\Omega$, we have $\psi_{21}(0)=0$. Similarly, by choosing $x_2=\sfw_2$ and $x_3=\sfw_3$, we have $\psi_{11}(0)=0$. Therefore, by Schwarz's lemma, $\psi_{i1}(z)=z\tilde{\psi}_{i1}(z)$ for some $\tilde{\psi}_{i1}\in H^\infty_1(\D)~ (i=1,2).$ With all these identities, ~\eqref{CNP_Id_Poly3} now reduces to 
\begin{align}\label{CNP_Id_Poly4}
\ft(x_1)\fu(x_2)=\ft(x_1) \varphi(\x)\tilde{\psi}_{11}(\varphi(\x)) + \fu(x_2)\varphi(\x) \tilde{\psi}_{21}(\varphi(\x)) +& \fv(x_3)\psi_{31}(\varphi(\x))\nonumber\\ &+ \ft(x_1)\fu(x_2)\fv(x_3)\psi_{41}(\varphi(\x)).
\end{align}
Finally, putting $x_3=\sfw_3$ in \eqref{CNP_Id_Poly4} we have $\ft(x_1)\fu(x_2)=0$ for all $x_1,x_2\in\Omega$, which is a contradiction. This proves our claim.

Thus, without any loss of generality, we assume that $\varphi_1$ is non-zero and $\varphi_2 \equiv 0$. In the rest of the proof we show that this leads to a contradiction. Since $\varphi_1$ is non zero, by the observation made earlier in the proof, 
\[
\varphi_1(x_2,x_3)=\varphi(\sfw_1, x_2, x_3)= \lambda \fu(x_2) \text{ or } \lambda \fv(x_3)\quad (x_2,x_3\in\Omega).
\]
On the other hand, $\varphi_2$ is identically zero forces that $\varphi(\sfw_1, x_2,x_3)= \lambda \fu(x_2)$ for all $x_2,x_3\in\Omega$. Expanding the second identity in ~\eqref{CNP_Id_Poly}, we get 
\begin{align}\label{CNP_Id_Poly2}
\varphi(\x)=\ft(x_1) \psi_{12}(\varphi(\x)) + \fu(x_2) \psi_{22}(\varphi(\x)) + \fv(x_3)\psi_{32}(\varphi(\x)) + \ft(x_1)\fu(x_2)\fv(x_3)\psi_{42}(\varphi(\x))
\end{align}
for all $\x=(x_1,x_2,x_3)\in\Omega^3$, where 
\[\psi_2(z)=\begin{bmatrix} \psi_{12}(z)& \psi_{22}(z) & \psi_{32}(z) & \psi_{42}(z)\end{bmatrix}^T\ \text{ and } \psi_{i2}(z)\in\clb(\C)\] 
for all $z\in\D$ and $i=1,\dots,4$.
Now taking $x_1=\sfw_1$ and $x_3=\sfw_3$ in ~\eqref{CNP_Id_Poly2}, we get
\[
\lambda \fu(x_2)=\fu(x_2) \psi_{22}(\varphi(\sfw_1,x_2,\sfw_3))= \fu(x_2) \psi_{22}(\lambda\fu(x_2))\quad (x_2\in\Omega).
\]
Using holomorphic property of all the functions involved, we conclude that $\psi_{22}\equiv \lambda$. This shows that $\psi_2(z)=\begin{bmatrix} 0& \lambda & 0 & 0\end{bmatrix}^T$ for all $z\in\D$, and therefore by \eqref{CNP_Id_Poly2}, $\varphi(\x)= \lambda \fu(x_2)$ for all $\x\in\Omega^3$. In such a case, the kernel of the de Branges-Rovnyak space $\clh(K^{\varphi})$ is 
\[
K^{\varphi}(\x,\mathbf y)= K_{\ft}(x_1,y_1)K_{\fv}(x_3,y_3)=\frac{1}{(1-\ft(x_1)\overline{\ft(y_1)})(1-\fv(x_3)\overline{\fv(y_3)})} \quad (\x,\mathbf y\in\Omega^3).
\]
which is not a CNP kernel. Indeed, by the open mapping theorem, we can find $x_1, x_2, y_1,y_2\in\Omega$ such that $\ft(x_1)=-\ft(y_1)=t$ and $\fv(x_2)=-\fv(y_2)=v$ for some non-zero real numbers $t$ and $v$. Now for the choice of points $w_1=(x_1,x_2)$ and $w_2=(y_1,y_2)$,
\[
\begin{bmatrix}
(1-\frac{1}{K^{\varphi}})(w_i,w_j)\end{bmatrix}^2_{i,j=1}=\begin{bmatrix} 1-(1-t^2)(1-v^2)& 1-(1+t^2)(1+v^2)\\
1-(1+t^2)(1+v^2)&1-(1-t^2)(1-v^2)\end{bmatrix} 
\]
 is not positive semi-definite.
This contradicts our assumption and also completes the proof.  
\end{proof}

By now, the reader must be convinced that the above result is also true for tensor products of more than three kernels, as one can reduce the number of kernels by considering appropriate slice functions. 

\begin{Theorem}
Let $n>3$, and let $\fu_i:\Omega^n\to \D$ be non-constant holomorphic functions such that $\fu_i(\sfw_i)=0$ $(1\le i\le n)$ for some $\mathbf w=(\sfw_1,\ldots, \sfw_n)\in \Omega^n$.  Consider the kernel  $K=\otimes_{i=1}^n K_{\fu_i}$ on $\Omega^n$. There is no non-trivial de Branges-Rovnyak subspace of $\clh(K)$ which is a CNP space. 
\end{Theorem}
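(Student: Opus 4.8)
The plan is to reduce the case of $n$ tensor factors to the case of three factors, which has already been handled in Theorem~\ref{three kernels}. Suppose for contradiction that $\varphi:\Omega^n\to\D$ is a non-constant holomorphic function in $\clm_1(\clh(K))$ such that $\clh(K^\varphi)$ is a CNP space; by Proposition~\ref{Iso_CNPs} we may assume $\varphi(\mathbf w)=0$. The key observation, entirely parallel to the one used in the proof of Theorem~\ref{three kernels}, is that if we freeze any $n-3$ of the variables at the corresponding coordinates of $\mathbf w$, the resulting slice function $\varphi'$ of the three remaining variables lies in $\clm_1$ of the tensor product of the three corresponding CNP kernels, and $\clh\big((K_{\fu_{i}}\otimes K_{\fu_{j}}\otimes K_{\fu_{k}})^{\varphi'}\big)$ is again a CNP space (this follows exactly as in Theorem~\ref{three kernels}, because $K^\varphi$ being CNP is inherited by restriction of $\Omega^n$ to the relevant three-dimensional slice). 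By Theorem~\ref{three kernels}, every such three-variable slice must be \emph{constant}, hence identically $0$ since it vanishes at the appropriate point.

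From here I would argue that $\varphi$ itself is forced to be constant, contradicting the hypothesis. First, freezing all but one variable gives a one-variable slice that is a further slice of some three-variable slice, hence identically $0$; more generally, freezing all but $k$ variables for any $k\le 3$ yields the zero function. The remaining step is a bootstrapping argument to pass from "every slice in $\le 3$ variables is zero" to "$\varphi\equiv 0$." For this I would use the expansion coming from Theorem~\ref{NCNP}: writing $1-\frac{1}{K(\x,\mathbf y)}=g(\x)g(\mathbf y)^*-f(\x)f(\mathbf y)^*$ with $g$ and $f$ built from the monomials in $\fu_1,\dots,\fu_n$ (so $g$ has a component $\prod_i \fu_i$ and the other components are sub-products), there is $\Psi=\begin{bmatrix}\psi_1 & \psi_2\end{bmatrix}\in H^\infty_1(\D,\clb(\cdots))$ with $\varphi(\x)=g(\x)\psi_2(\varphi(\x))$ for all $\x\in\Omega^n$. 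Since every three-variable slice of $\varphi$ through $\mathbf w$ is $0$, repeated use of Schwarz's lemma on the scalar components $\psi_{i2}$ (exactly as in Claim~II of the proof of Theorem~\ref{three kernels}) forces enough vanishing of $\Psi$ at $0$ that the identity $\varphi(\x)=g(\x)\psi_2(\varphi(\x))$ collapses to $\varphi(\x)=\big(\text{monomial of degree}\ge 2\ \text{in the}\ \fu_i\big)\cdot(\cdots)$, and then setting all but two coordinates equal to those of $\mathbf w$ yields a contradiction with the slice being $0$ unless $\varphi$ itself is forced to vanish.

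Rather than carrying out this somewhat delicate bootstrap in full, a cleaner route — which I would prefer to present — is a genuine induction on $n$. The inductive hypothesis is the statement of the theorem for $n-1$ factors (with $n-1\ge 3$, the base case $n=3$ being Theorem~\ref{three kernels}). Given $\varphi$ as above with $\clh(K^\varphi)$ CNP and $\varphi(\mathbf w)=0$, consider the slice $\varphi_n(x_1,\dots,x_{n-1}):=\varphi(x_1,\dots,x_{n-1},\sfw_n)$. As noted, $\varphi_n\in\clm_1\big(\clh(\otimes_{i=1}^{n-1}K_{\fu_i})\big)$ and the associated de Branges-Rovnyak space is CNP, so by the inductive hypothesis $\varphi_n$ is constant, hence $\equiv 0$. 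By symmetry, $\varphi(x_1,\dots,\widehat{x_j},\dots)\equiv 0$ whenever $x_j$ is frozen at $\sfw_j$, for every $j$. In particular all $n$ ``first-order'' slices vanish. Now one runs the argument from Claim~II of Theorem~\ref{three kernels}: expand $\varphi(\x)=g(\x)\psi_2(\varphi(\x))$, use the vanishing of the slices together with Schwarz's lemma to peel a factor $\varphi(\x)$ off the two ``linear'' terms $\fu_1(x_1)\psi_{12}(\varphi(\x))$ and $\fu_2(x_2)\psi_{22}(\varphi(\x))$ (wait — one must be slightly more careful: here one peels factors off enough terms), and conclude that after setting all coordinates but one equal to those of $\mathbf w$, the identity degenerates, forcing $\varphi\equiv 0$, a contradiction.

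The main obstacle I anticipate is the combinatorial/analytic bookkeeping in the last step: with $2^n-1$ monomial components in $g$ and $f$, one must check that the vanishing obtained from the first-order slices and Schwarz's lemma really does propagate to a contradiction, rather than leaving room for a nonzero $\varphi$. This is where the induction pays off, because it lets us assume all proper slices are already zero before engaging with the full identity, so in practice one only needs the one-variable-at-a-time reduction plus a single application of the Claim~II mechanism; the heart of the matter — that a tensor product of three or more CNP kernels admits no non-trivial CNP de Branges-Rovnyak subspace — has already been isolated in Theorem~\ref{three kernels}, and the rest is an organized descent.
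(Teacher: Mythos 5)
Your overall strategy is essentially the paper's, arranged in the contrapositive: the paper first shows (by the Claim~II mechanism) that not all of the $(n-1)$-variable slices through $\mathbf w$ can vanish, and then feeds a non-zero slice into the inductive hypothesis, whereas you first use the inductive hypothesis to force every slice to vanish and then look for a contradiction. Your reductions up to that point are sound: each slice $\varphi_j$ lies in $\clm_1$ of the $(n-1)$-fold tensor product, its de Branges--Rovnyak kernel is the restriction of $K^{\varphi}$ to the slice through $\mathbf w$ and hence is again CNP, and (being zero at the restricted base point) it is non-constant unless it is identically zero; so the induction with Theorem~\ref{three kernels} as base case does give $\varphi_j\equiv 0$ for all $j$.

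The gap is in the final step, in both of your routes: you propose to extract the contradiction from the second identity $\varphi(\x)=g(\x)\psi_2(\varphi(\x))$, but that identity cannot force $\varphi\equiv 0$ once all first-order slices vanish. Specializing all but one coordinate to $\mathbf w$ only yields that the singleton entries of $\psi_2$ vanish at $0$, and further specializations give $0=0$. Concretely, for odd $n$ the contractive multiplier $\varphi=\prod_{i=1}^{n}\fu_i$ has all first-order slices identically zero and satisfies $\varphi=g\psi_2(\varphi)$ with $\psi_2$ the constant unit vector at the full-product entry of $g$ (the entries of $g$ are the products of the $\fu_i$ over odd subsets and those of $f$ over nonempty even subsets, so for even $n$ the full product in fact sits in $f$, not $g$, as a side correction to your description). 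Such $\varphi$ is ruled out only by the \emph{first} identity, and indeed Claim~II of Theorem~\ref{three kernels}, which you cite, runs on $f(\x)=g(\x)\psi_1(\varphi(\x))$, not on the $\psi_2$-identity. The repair is immediate and needs no Schwarz lemma once all slices vanish: take the component of $f(\x)=g(\x)\psi_1(\varphi(\x))$ whose left-hand side is $\fu_1(x_1)\fu_2(x_2)$ and set $x_j=\sfw_j$ for $j\ge 3$; every entry of $g$ containing a factor $\fu_j(x_j)$ with $j\ge 3$ vanishes and $\varphi\equiv 0$ on this slice, leaving
\begin{equation*}
\fu_1(x_1)\fu_2(x_2)=\fu_1(x_1)\,a+\fu_2(x_2)\,b
\end{equation*}
for constants $a,b$, which is impossible for non-constant $\fu_1,\fu_2$ (put $x_1=\sfw_1$ to get $b=0$, then $x_2=\sfw_2$ to get $a=0$, forcing $\fu_1\fu_2\equiv 0$). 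With this substitution your induction closes and coincides with the paper's proof.
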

\begin{proof}
Suppose that $\varphi:\Omega^n\to \D$ is a non-constant holomorphic multiplier in $\clm_1(\clh(K))$ such that $\varphi(\mathbf w)=0$ and $\clh(K^{\varphi})$ is a CNP space. Define the slice functions of $\varphi$, for each $i=1,\ldots, n$, by 
\[\varphi_i :\Omega^{n-1}\to \D,\ \varphi_i(x_1, \ldots, x_{n-1})=\varphi( x_1,\ldots, x_{i-1}, \underset{i\text{th place}}{\sfw_i}, x_{i+1},\ldots, x_{n-1}).\]
Then, by the same way as it is done in the proof of Theorem~\ref{three kernels}, one shows that all the $\varphi_i$'s are not identically zero. Moreover, if $\varphi_i$ is non-zero then the de Branges-Rovnyak subspace corresponding to $\varphi_i$ in $\clh(\otimes_{j\neq i} K_{\fu_j})$ is a CNP space.
Thus by a simple induction argument and Theorem~\ref{three kernels}, such a $\varphi$ does not exist. This completes the proof.  
\end{proof}

 As an immediate consequence of the above two results, we have the following corollary for the Hardy space over the polydisc. 
\begin{Corollary}
For $n\geq 3$, there is no non-constant $\varphi\in H^\infty_1(\D^n)$ such that 
the de Branges-Rovnyak space $\clh(\cls_n^\varphi)$ corresponding to the Szeg\"o kernel $\cls_n$ on $\D^n$ is a CNP space.
\end{Corollary}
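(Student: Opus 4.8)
The plan is to obtain this as an immediate specialisation of Theorem~\ref{three kernels} and its $n>3$ generalisation stated just above. I would take $\Omega=\D$ and, for each $1\le i\le n$, let $\fu_i:\D\to\D$ be the identity map $\fu_i(z)=z$; these are non-constant holomorphic functions with $\fu_i(0)=0$ and $K_{\fu_i}(z,w)=\frac{1}{1-z\overline{w}}=\cls_1(z,w)$. Since the reproducing kernel Hilbert space of a tensor product of kernels is the Hilbert space tensor product of the factors, the Szeg\"o kernel on $\D^n$ is exactly $\cls_n=\otimes_{i=1}^n K_{\fu_i}$, with distinguished point $\mathbf w=(0,\dots,0)\in\D^n$.

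It then remains only to verify that an arbitrary non-constant $\varphi\in H^\infty_1(\D^n)$ is admissible for those theorems. First, since $\|\varphi\|_\infty\le 1$ and $\varphi$ is non-constant, the maximum modulus principle forces $|\varphi(z)|<1$ for every $z\in\D^n$, so $\varphi$ is a well-defined non-constant holomorphic map $\D^n\to\D$. Second, the multiplier algebra of $H^2(\D^n)$ is $H^\infty(\D^n)$ with equality of norms, so $\varphi\in\clm_1(\clh(\cls_n))$. After replacing $\varphi$ by the normalized multiplier $b_{\varphi(\mathbf w)}\circ\varphi$ furnished by Proposition~\ref{Iso_CNPs}, which preserves the CNP property of the de Branges-Rovnyak space, we are precisely in the hypotheses of Theorem~\ref{three kernels} when $n=3$ and of its $n>3$ generalisation when $n\ge 4$. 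Those results say that $\clh(\cls_n^\varphi)$ is not a CNP space, which is the assertion.

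I do not expect any genuine obstacle here: the corollary is a pure application, and the only steps deserving a line of justification are the identification $\cls_n=\otimes_{i=1}^n K_{\fu_i}$ and the standard fact that a non-constant element of $H^\infty_1(\D^n)$ automatically maps $\D^n$ into $\D$ and acts as a contractive multiplier of $H^2(\D^n)$; everything of substance is already contained in the two preceding theorems.
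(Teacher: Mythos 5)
Your proposal is correct and is exactly the paper's intended argument: the corollary is stated there as an immediate consequence of Theorem~\ref{three kernels} and its $n>3$ generalisation, obtained by taking $\Omega=\D$ and $\fu_i(z)=z$ so that $\cls_n=\otimes_{i=1}^n K_{\fu_i}$, with the multiplier identification $\clm_1(\clh(\cls_n))=H^\infty_1(\D^n)$. The only remark is that the normalization step via Proposition~\ref{Iso_CNPs} is not even needed on your end, since those theorems already perform it internally and exclude all non-constant multipliers, not just those vanishing at $\mathbf w$.
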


\subsection*{Schur product of CNP kernels}
 Let $K_1$ and $K_2$ be kernels on $\Omega$. Then the Schur product of $K_1$ and $K_2$ is a kernel on $\Omega$ denoted by $K_1\circ K_2$ and defined as 
   \[
   (K_1\circ K_2)(x,y)= K_1(x,y)K_2(x,y)\quad (x, y\in\Omega).
   \]
The positive semi-definite property of $K_1\circ K_2$ follows from Schur's theorem, which says that Schur product (entry-wise product) of positive semi-definite matrices is positive semi-definite (see ~\cite{PaulRaghu}). We consider the case of Schur product of CNP kernels in the following theorem.

\begin{Theorem}\label{NCNPs}
Let $u:\Omega\to \D$ be a non-constant holomorphic function such that $u(\mathbf w)=0$ for some $\mathbf w\in \Omega$. Consider the kernel $K=K_u\circ K_u$. Assume that $\varphi:\Omega\to \D$ is a non-constant holomorphic function in $\clm_1(\clh(K))$ with $\varphi(\mathbf w)=0$. Then the de Branges-Rovnyak space $\clh(K^{\varphi})$ is a CNP space if and only if $\varphi(x)=\lambda u(x)$ for some $\lambda\in \mathbb{T}$.  

\end{Theorem}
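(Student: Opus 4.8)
The plan is to follow the same strategy used in Theorem~\ref{NCNPm}, reducing the problem to a system of identities via Theorem~\ref{NCNP} and then analyzing those identities directly. First I would compute $1-\frac{1}{K}$ for $K = K_u \circ K_u$. Since $K_u(x,y) = \frac{1}{1-u(x)\overline{u(y)}}$, we have $K(x,y) = \frac{1}{(1-u(x)\overline{u(y)})^2}$, so $\frac{1}{K(x,y)} = (1-u(x)\overline{u(y)})^2 = 1 - 2u(x)\overline{u(y)} + u(x)^2\overline{u(y)}^2$, and hence
\[
1 - \frac{1}{K(x,y)} = 2u(x)\overline{u(y)} - u(x)^2\overline{u(y)}^2 = g(x)g(y)^* - f(x)f(y)^*,
\]
where $g(x) = \sqrt{2}\,u(x) \in \clb(\C)$ and $f(x) = u(x)^2 \in \clb(\C)$. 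Then Theorem~\ref{NCNP} applies with $\cle = \clf = \C$: $\clh(K^\varphi)$ is a CNP space if and only if there exists $\Psi = \begin{bmatrix}\psi_1 & \psi_2\end{bmatrix} \in H^\infty_1(\D, \clb(\C^2, \C))$ with
\[
u(x)^2 = \sqrt{2}\,u(x)\,\psi_1(\varphi(x)) \quad\text{and}\quad \varphi(x) = \sqrt{2}\,u(x)\,\psi_2(\varphi(x)) \qquad (x \in \Omega).
\]

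Next I would extract the consequences. From the first identity, since $u$ is a non-constant holomorphic function on the domain $\Omega$ (hence nonzero off a discrete set), we get $u(x) = \sqrt{2}\,\psi_1(\varphi(x))$ for all $x \in \Omega$ — this holds on a dense open set and then everywhere by continuity/holomorphy. Substituting into the second identity yields $\varphi(x) = 2\,\psi_1(\varphi(x))\,\psi_2(\varphi(x))$ for all $x \in \Omega$. Since $\varphi$ is non-constant and holomorphic, its range contains a nonempty open subset of $\D$ by the open mapping theorem, so the identity $z = 2\,\psi_1(z)\,\psi_2(z)$ holds for all $z$ in that open set, and therefore on all of $\D$ by the identity theorem. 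Now $\psi_1, \psi_2 \in H^\infty_1(\D)$ so $|\psi_1(z)|, |\psi_2(z)| \le 1$; writing $2\psi_1(z)\psi_2(z) = z$ and evaluating on $\D$, for $|z|$ close to $1$ we need $|\psi_1(z)||\psi_2(z)|$ close to $1/2 \cdot |z|$ — this is not immediately a contradiction, so I must argue more carefully. The clean route: from $u(x) = \sqrt{2}\psi_1(\varphi(x))$ and $\varphi(\mathbf w) = 0 = u(\mathbf w)$, we get $\psi_1(0) = 0$; from $\varphi(x) = \sqrt{2}u(x)\psi_2(\varphi(x))$ and $\varphi(\mathbf w) = 0$, and since near $\mathbf w$ both sides vanish, comparing leading behavior one expects $\psi_2(0) \ne 0$ in general. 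By Schwarz's lemma $\psi_1(z) = z\,\tilde\psi_1(z)$ with $\tilde\psi_1 \in H^\infty_1(\D)$, so $z = 2\,z\,\tilde\psi_1(z)\,\psi_2(z)$, giving $2\,\tilde\psi_1(z)\,\psi_2(z) = 1$ on $\D$. Since $\tilde\psi_1, \psi_2$ are contractive, $|\tilde\psi_1(z)\psi_2(z)| \le 1$, so $2|\tilde\psi_1(z)\psi_2(z)| = 1$ forces $|\tilde\psi_1(z)| \ge 1/2$ and $|\psi_2(z)| \ge 1/2$ everywhere; more to the point, a bounded holomorphic function that is a constant multiple of the reciprocal of another bounded holomorphic function, with the product of sup-norms meeting the bound $1$, must itself be constant — indeed $\tilde\psi_1(z) = \frac{1}{2\psi_2(z)}$ is holomorphic and bounded by $1$, while $\psi_2$ is bounded by $1$, so $|\psi_2(z)| \ge 1/2$; but also from $\tilde\psi_1$ bounded by $1$ we get $|\psi_2(z)| \ge 1/2$, and the only way $2\tilde\psi_1\psi_2 \equiv 1$ with both factors in the closed unit ball is $\tilde\psi_1 \equiv c$, $\psi_2 \equiv \frac{1}{2c}$ with $|c| \le 1$ and $\frac{1}{2|c|} \le 1$, i.e. $|c| \ge 1/2$. (This last point I would argue by the maximum modulus principle: $\tilde\psi_1$ and $1/\psi_2 = 2\tilde\psi_1$ are both holomorphic on $\D$; $\psi_2$ never vanishes, $|\psi_2| \ge 1/2$, and $|2\tilde\psi_1| \le 2$; applying the open mapping theorem to $\tilde\psi_1$, if it is non-constant its range is open, but then $2\tilde\psi_1\psi_2 \equiv 1$ forces $\psi_2 = \frac{1}{2\tilde\psi_1}$ whose modulus would exceed $1$ wherever $|\tilde\psi_1| < 1/2$ — a contradiction unless $\tilde\psi_1$ is constant.) Hence $\tilde\psi_1 \equiv c$ and then $u(x) = \sqrt{2}\,\varphi(x)\,c$, i.e. $\varphi(x) = \frac{1}{\sqrt{2}\,c}\,u(x)$; writing $\lambda = \frac{1}{\sqrt{2}\,c}$ and using that $\varphi$ maps into $\D$ while $u$ maps into $\D$ and both have the same domain, the requirement $\varphi \in \clm_1$ forces $|\lambda| \le 1$ — but we also need $|\lambda| \ge 1$ from the constraint $\frac{1}{2|c|} \le 1$ rewritten, so actually one checks $|\lambda| = 1$, i.e. $\lambda \in \mathbb{T}$.

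For the converse, if $\varphi(x) = \lambda u(x)$ with $\lambda \in \mathbb{T}$, then
\[
K^\varphi(x,y) = (1 - \varphi(x)\overline{\varphi(y)})\,K(x,y) = \frac{1 - u(x)\overline{u(y)}}{(1-u(x)\overline{u(y)})^2} = \frac{1}{1-u(x)\overline{u(y)}} = K_u(x,y),
\]
which is a CNP kernel by Theorem~\ref{MQAM} (it is non-vanishing and normalized at $\mathbf w$). Hence $\clh(K^\varphi) = \clh(K_u)$ is a CNP space. I expect the main obstacle to be the rigidity argument showing $2\tilde\psi_1\psi_2 \equiv 1$ with both factors contractive forces them to be constants of modulus $1/\sqrt 2$ each (equivalently pinning down $|\lambda| = 1$) — this is where one must invoke the maximum modulus principle and the non-vanishing of $\psi_2$ carefully, rather than a naive norm estimate; everything else is a direct computation plus an application of Theorem~\ref{NCNP}.
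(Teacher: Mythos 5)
Your setup is the same as the paper's: the decomposition $1-\tfrac{1}{K}=g g^*-f f^*$ with $g=\sqrt{2}\,u$, $f=u^2$, the application of Theorem~\ref{NCNP}, the cancellation giving $u(x)=\sqrt{2}\,\psi_1(\varphi(x))$, the identity $z=2\psi_1(z)\psi_2(z)$ on $\D$ via the open mapping theorem, and the converse direction are all correct and agree with the paper. The gap is in your rigidity step. After writing $\psi_1(z)=z\tilde\psi_1(z)$ and obtaining $2\tilde\psi_1(z)\psi_2(z)\equiv 1$, you claim that the separate bounds $|\tilde\psi_1|\le 1$, $|\psi_2|\le 1$ force both functions to be constant, arguing that a non-constant $\tilde\psi_1$ would have open range and hence take values of modulus less than $1/2$. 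That does not follow: the identity only forces $|\tilde\psi_1(z)|\ge 1/2$ everywhere, and a non-constant holomorphic function can perfectly well map $\D$ into the annulus $\{1/2<|w|<1\}$. Concretely, $\tilde\psi_1(z)=\tfrac34+\tfrac18 z$ and $\psi_2=\tfrac{1}{2\tilde\psi_1}$ satisfy $2\tilde\psi_1\psi_2\equiv 1$ with both functions non-constant and bounded by $1$ in modulus, so your maximum-modulus/open-mapping argument cannot close the proof. Moreover, even granting constancy $\tilde\psi_1\equiv c$, your constraints only yield $|c|\le 1$ and $|1/(2c)|\le 1$, i.e.\ $1/\sqrt{2}\le|\lambda|\le\sqrt{2}$; the final assertion $|\lambda|=1$ is not derived (and it is genuinely needed: for $|\lambda|<1$ the kernel $(1-|\lambda|^2 u\bar u)/(1-u\bar u)^2$ is not CNP).

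The missing ingredient is the \emph{joint} contractivity of $\Psi$: since $\Psi(z)=\begin{bmatrix}\psi_1(z)&\psi_2(z)\end{bmatrix}$ is a row contraction, one has $|\psi_1(z)|^2+|\psi_2(z)|^2\le 1$, not merely the two separate bounds. This is what the paper exploits: combining it with $2\psi_1\psi_2=z$ gives $1-\sqrt{1-|z|^2}\le 2|\psi_1(z)|^2\le 1+\sqrt{1-|z|^2}$, so the nontangential boundary values satisfy $\sqrt{2}|\psi_1|=1$ a.e.\ on $\mathbb{T}$; hence $\sqrt{2}\psi_1$ and then $\sqrt{2}\psi_2$ are inner. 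Since $\psi_1(0)=0$, the factorization of the inner function $z$ as the product $(\sqrt{2}\psi_1)(\sqrt{2}\psi_2)$ of two inner functions forces $\sqrt{2}\psi_2\equiv\lambda\in\mathbb{T}$ and $\sqrt{2}\psi_1(z)=\bar\lambda z$, whence $\varphi=\lambda u$ with $\lambda\in\mathbb{T}$. Your argument needs to be replaced by (or supplemented with) this boundary-value/inner-function step; an interior maximum-modulus argument with only the individual sup-norm bounds is not enough.
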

\begin{proof}
For all $x,y\in\Omega$, note that 
\[
1-\frac{1}{K(x,y)}=g(x)g(y)^*-f(x)f(y)^*,
\]
where $f,g: \Omega\to \clb(\C,\C)$ are given by 
\[
g(x)=\sqrt{2} u(x)\ \text{ and } f(x)= u(x)^2 \quad (x\in\Omega).
\]
Then by Theorem~\ref{NCNP}, $\clh(K^{\varphi})$ is a CNP space if and only if there exists $\Psi\in H^{\infty}_1(\D, \clb(\C^2,\C))$ such that 
\begin{equation}\label{Identity1}
u(x)=\sqrt{2}\psi_1(\varphi(x))\ \text{ and } \varphi(x)=\sqrt{2}u(x)\psi_2(\varphi(x)),\quad (x\in\Omega)
\end{equation}
where $\Psi(z)=\begin{bmatrix}\psi_1(z)& \psi_2(z) \end{bmatrix}$ for all $z\in\D$.
By ~\eqref{Identity1}, $2\psi_1(\varphi(x))\psi_2(\varphi(x))=\varphi(x)$ for all $x\in\Omega$. The holomorphic function $\varphi$, being non-constant, is an open map. Then the above identity holds for all $z\in \D$, that is 
\begin{equation}\label{Identity 2}
2\psi_1(z)\psi_2(z)=z \quad (z\in\D).
\end{equation}

\textbf{Claim:} The functions $\sqrt{2}\psi_1$ and $\sqrt{2}\psi_2$ are inner functions. 

\textit{Proof of the claim.} Since $|\psi_1(z)|^2+|\psi_2(z)|^2\leq 1$ for all $z\in\mathbb D$, then using ~\eqref{Identity 2} we have 
\begin{align*}
|\psi_1(z)|^2+ \frac{|z|^2}{4|\psi_1(z)|^2} \leq 1, \quad (z\in \D, z\neq 0)
    \end{align*}
that is, 
\begin{align}\label{Identity 3}
1-\sqrt{1-|z|^2}\leq 2|\psi_1(z)|^2\leq1+\sqrt{1-|z|^2}\quad (z\in \D, z\neq 0).
\end{align}
Since $\psi_1 \in H^\infty(\D)$, the non-tangential $\lim_{z\to e^{i\theta}}|\psi_1(z)|$ exists for almost all $e^{i \theta}\in\mathbb T$. Hence, by \ref{Identity 3}, $\lim_{z\to e^{i\theta}}\sqrt{2}|\psi_1(z)|=1$ for almost all $e^{i \theta}\in\mathbb T$. In other words, $\sqrt{2}\psi_1$ is an inner function. Therefore it follows from ~\eqref{Identity 2} that $\sqrt{2}\psi_2$ is also an inner function, and hence the claim is proved.  

The first identity of ~\eqref{Identity1} shows that $\psi_1(0)=0$. The identity ~\eqref{Identity 2} and $\psi_1(0)=0$, that is, $z$ is a product of two inner functions, holds only if 
\[
\sqrt{2}\psi_2(z)=\lambda\ \text{ and } \sqrt{2}\psi_1(z)=\bar{\lambda} z\quad (z\in\D)\]
for some $\lambda\in \mathbb{T}$. The proof now follows from the second identity of ~\eqref{Identity1}.
\end{proof}
 By applying the above result to the Bergman space over the disc, which has the kernel
 \[B(z,w)=\frac{1}{(1-z \overline{w})^2},\quad (z,w\in \D)
 \]
 we get the following result. 
\begin{Corollary}\label{BergCNP}
Let $\varphi$ be a non-constant function in $H^\infty_1(\D)$ such that $\varphi\in\clm_1(\clh(B))$.
 Then the de Branges-Rovnyak subspace $\clh(B^{\varphi})$ of the Bergman space over $\D$ is a CNP space if and only if $\varphi(z)=\lambda b_{\mu}(z)$ for some $\lambda\in \mathbb{T}$ and an automorphism $b_{\mu}$ of the unit disc. 
\end{Corollary}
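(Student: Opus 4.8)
\emph{Proof proposal.} The plan is to read this corollary off from Theorem~\ref{NCNPs} after identifying the Bergman kernel as a Schur square and normalizing the multiplier at the origin. The first step is the observation that with $u:\D\to\D$ the identity map $u(z)=z$ — which is a non-constant holomorphic function on the domain $\D$ vanishing at $\mathbf w=0$ — one has $K_u(z,w)=(1-z\overline{w})^{-1}$, and therefore $B=K_u\circ K_u$. Thus $B$ is exactly the kernel treated in Theorem~\ref{NCNPs}, and that theorem applies directly whenever the multiplier is normalized so that $\varphi(0)=0$, in which case it tells us that $\clh(B^\varphi)$ is a CNP space if and only if $\varphi(z)=\lambda z$ for some $\lambda\in\mathbb T$. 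Note $b_0(z)=z$, so this already matches the asserted form in the normalized case.

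To remove the normalization I would invoke Proposition~\ref{Iso_CNPs}. Given $\varphi\in H^\infty_1(\D)\cap\clm_1(\clh(B))$ with $\varphi(0)=\mu$, set $\tilde\varphi=b_\mu\circ\varphi$, so that $\tilde\varphi(0)=0$, $\tilde\varphi\in\clm_1(\clh(B))$, and $\clh(B^\varphi)$ is a CNP space precisely when $\clh(B^{\tilde\varphi})$ is. Applying Theorem~\ref{NCNPs} to $\tilde\varphi$ then shows that $\clh(B^\varphi)$ is a CNP space if and only if $b_{\varphi(0)}\circ\varphi$ is a rotation of $\D$, i.e. $b_{\varphi(0)}(\varphi(z))=\lambda z$ for some $\lambda\in\mathbb T$ (when $\varphi(0)=0$ this is the special case $b_0=\mathrm{id}$).

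It remains to recognize that this last condition is exactly the statement $\varphi(z)=\lambda b_\mu(z)$. If $b_{\varphi(0)}(\varphi(z))=\lambda z$, then $\varphi(z)=b_{-\varphi(0)}(\lambda z)$ (using $b_\mu^{-1}=b_{-\mu}$), which is a composition of disc automorphisms and hence a disc automorphism, so $\varphi=\lambda' b_{\mu'}$ for some $\lambda'\in\mathbb T$ and $\mu'\in\D$. Conversely, if $\varphi=\lambda' b_{\mu'}$ then $\varphi$ is a disc automorphism, whence $b_{\varphi(0)}\circ\varphi$ is a disc automorphism fixing the origin and is therefore a rotation; moreover such a $\varphi$ genuinely lies in $\clm_1(\clh(B))$ because $(1-\varphi(z)\overline{\varphi(w)})B(z,w)$ is a product of positive kernels, so the equivalences are not vacuous. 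Chaining these, $\clh(B^\varphi)$ is a CNP space if and only if $\varphi(z)=\lambda b_\mu(z)$.

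There is no substantial obstacle here: the entire analytic content sits in Theorem~\ref{NCNPs}, and what remains is elementary. The only points requiring a little care are the M\"obius bookkeeping — correctly inverting $b_\mu$ and keeping track of the unimodular constants — and the order of operations, namely that one must first carry out the normalization step of Proposition~\ref{Iso_CNPs} before quoting Theorem~\ref{NCNPs}, since the latter is stated only for multipliers vanishing at the base point.
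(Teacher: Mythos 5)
Your proposal is correct and follows exactly the route the paper intends: identify $B=K_u\circ K_u$ with $u(z)=z$, normalize via Proposition~\ref{Iso_CNPs}, apply Theorem~\ref{NCNPs}, and translate $b_{\varphi(0)}\circ\varphi(z)=\lambda z$ back into $\varphi=\lambda' b_{\mu'}$. The M\"obius bookkeeping and the check that $\lambda b_\mu\in\clm_1(\clh(B))$ are handled correctly, so nothing is missing.
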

Recall that for each real number $\alpha>-1$, the corresponding weighted Bergman spaces on the unit disc has the kernel
\[
B_{\alpha}(z,w)=\frac{1}{(1-z\overline{w})^{2+\alpha}}\quad (z,w\in\D).
\]

\begin{Theorem}\label{BergNonCNP}
For any real number $\alpha\geq1$, there is no non-constant $\varphi\in H^\infty_1(\D)$ such that the de Branges-Rovnyak space $\clh(B^\varphi_\alpha)$ of the weighted Bergman space $\clh(B_{\alpha})$ is a CNP space.
\end{Theorem}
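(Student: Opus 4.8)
The plan is to reduce to the normalized situation, exhibit an explicit ``coordinatewise'' splitting $1-\tfrac1{B_\alpha}=gg^*-ff^*$, feed it into Theorem~\ref{NCNP}, and read off a first-order identity at the base point that the Schwarz lemma forbids once $\alpha>1$. By Proposition~\ref{Iso_CNPs} we may assume $\varphi(0)=0$; a non-constant $\varphi\in H^\infty_1(\D)$ maps $\D$ into $\D$ and belongs to $\clm_1(\clh(B_\alpha))$, because $(1-\varphi(z)\overline{\varphi(w)})B_\alpha(z,w)$ is the Schur product of the positive de Branges--Rovnyak kernel $\frac{1-\varphi(z)\overline{\varphi(w)}}{1-z\bar w}$ of the Hardy space with the positive kernel $(1-z\bar w)^{-(1+\alpha)}$. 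Also $B_\alpha$ is non-vanishing and normalized at $\sfw=0$, so Theorem~\ref{NCNP} is applicable.

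Expand
\[
1-\frac{1}{B_\alpha(z,w)}=1-(1-z\bar w)^{2+\alpha}=\sum_{n\ge1}d_n\,(z\bar w)^n,\qquad d_n:=(-1)^{n+1}\binom{2+\alpha}{n},
\]
noting $d_1=2+\alpha>0$ and $d_2=-\binom{2+\alpha}{2}<0$. Collecting positive and negative coefficients gives $1-\tfrac1{B_\alpha}=gg^*-ff^*$ with $g:\D\to\clb(\clf,\C)$, $g(z)e_n=\sqrt{d_n}\,z^n$ over the indices $n$ with $d_n>0$, and $f:\D\to\clb(\cle,\C)$, $f(z)\epsilon_n=\sqrt{-d_n}\,z^n$ over the indices $n$ with $d_n<0$; both series converge on $\D$, $g\neq0$ (it carries the component $\sqrt{2+\alpha}\,z$), the least index occurring in $f$ is $n=2$, and the indices of $g$ other than $1$ are all $\ge3$. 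Theorem~\ref{NCNP} then gives: if $\clh(B_\alpha^\varphi)$ is a CNP space, there is $\Psi=[\psi_1\ \psi_2]\in H^\infty_1(\D,\clb(\cle\oplus\C,\clf))$ with $f(x)=g(x)\psi_1(\varphi(x))$ for all $x\in\D$ (only this first identity will be used).

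Applying $f=g\,\psi_1(\varphi)$ to the basis vector $\epsilon_2\in\cle$ and setting $A(\zeta):=\langle\psi_1(\zeta)\epsilon_2,e_1\rangle\in H^\infty_1(\D)$, the $n\ge3$ components of $g$ contribute only terms vanishing to order $\ge3$ at the origin, so
\[
\sqrt{-d_2}\,z^2=\sqrt{d_1}\,z\,A(\varphi(z))+O(z^3),\qquad\text{hence}\qquad A(\varphi(z))=\sqrt{\tfrac{1+\alpha}{2}}\,z+O(z^2),
\]
using $-d_2/d_1=(1+\alpha)/2$. In particular $A(0)=0$, so $A(\zeta)=\zeta\,\widetilde A(\zeta)$ with $\widetilde A\in H^\infty_1(\D)$ by the Schwarz lemma. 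Writing $\varphi(z)=z^k\varphi_0(z)$ with $\varphi_0(0)\neq0$, $k\ge1$ ($\varphi$ non-constant, $\varphi(0)=0$), we get $A(\varphi(z))=\varphi_0(0)\widetilde A(0)\,z^k+O(z^{k+1})$; comparison forces $k=1$ and $\varphi'(0)\widetilde A(0)=\sqrt{(1+\alpha)/2}$. Since $|\varphi'(0)|\le1$ (Schwarz, as $\varphi:\D\to\D$ fixes $0$) and $|\widetilde A(0)|\le1$, this yields $\tfrac{1+\alpha}{2}\le1$, i.e.\ $\alpha\le1$; for $\alpha>1$ this is already a contradiction. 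When $\alpha=1$, all inequalities become equalities, so $|\varphi'(0)|=1$ and the equality case of the Schwarz lemma gives $\varphi(z)=\lambda z$ for some $\lambda\in\mathbb T$; but then $B_1^\varphi(z,w)=(1-z\bar w)^{-2}$ is the Bergman kernel, which is not a CNP kernel (for instance $1-1/B_0=2z\bar w-z^2\bar w^2$ is not a positive kernel, as seen on the two-point set $\{r,-r\}$), again a contradiction. So no non-constant $\varphi$ exists for $\alpha\ge1$.

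The decisive design choice is the splitting: it must be arranged so that the lowest-order balance in $f=g\psi_1(\varphi)$ isolates exactly the ratio $\sqrt{-d_2/d_1}=\sqrt{(1+\alpha)/2}$, which is $>1$ precisely when $\alpha>1$. I expect the main obstacle to be the index bookkeeping that produces that single scalar equation cleanly (convergence of the defining series for $g,f$, non-triviality of $g$, and the absence of interference from other components of $g$ at orders $\le2$), together with the observation that $\alpha=1$ is genuinely borderline and must be closed off through the non-CNP-ness of the Bergman kernel --- which is also why the method stops at $\alpha=1$: for $0<\alpha<1$ one has $\sqrt{(1+\alpha)/2}<1$ and the argument produces no contradiction.
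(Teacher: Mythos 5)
Your proposal is correct and takes essentially the same route as the paper: the same positive/negative splitting of the binomial expansion of $1-\tfrac{1}{B_\alpha}$, Theorem~\ref{NCNP} applied to the lowest ($z^2$) component of $f$, a Schwarz-lemma comparison at the origin yielding $|\varphi'(0)\widetilde A(0)|=\sqrt{(1+\alpha)/2}$ and hence a contradiction for $\alpha>1$, with the borderline case $\alpha=1$ closed off by $\varphi(z)=\lambda z$ and the non-CNP-ness of the Bergman kernel. The only differences are cosmetic (Schwarz equality case versus maximum modulus, and your explicit checks of $H^\infty_1(\D)\subseteq\clm_1(\clh(B_\alpha))$ and of the two-point Pick matrix, which the paper leaves implicit).
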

\begin{proof}
Without loss of generality we assume that $\varphi(0)=0$. Let $p=2+\alpha$. For all $z,w\in\D$, note that 
    \[1-\frac{1}{B_\alpha(z,w)}=g(z)g(w)^*-f(z)f(w)^*\]
where $f$ and $g$ are given by 
\[g(z)=\left(\sqrt{p}z,\sqrt{\frac{p(p-1)(p-2)}{6}}z^3,\ldots \right),\]
and 
\[f(z)=\left(\sqrt{\frac{p(p-1)}{2}}z^2, \sqrt{\frac{p(p-1)(p-2)(p-3)}{4!}}z^4\ldots \right).\]
The remaining terms in the expression of $f$ and $g$ depends on the exact value of $p$. For our purpose the first two terms of both $f$ and $g$ will be enough. Let us write that 
 $f(z)\in\clb(\cle,\C)$ and $g(z)\in\clb(\clf,\C)$, for all $z\in\D$, where $\cle$ and $\clf$ are Hilbert spaces and depends entirely on $p$. 
    Then by Theorem~\ref{NCNP}, $\clh(B^{\varphi}_\alpha)$ is a CNP space if and only if there exists $\Psi\in H^{\infty}_1(\D, \clb(\cle\oplus\C,\clf))$ such that 
    \begin{align}\label{BergNonCNPId1}
      f(z)=g(z)\psi_1(\varphi(z))\quad \text{and}\quad \varphi(z)=g(z)\psi_2(\varphi(z))\quad (z\in\D)  
    \end{align}
    where for all $z\in\D$, $\Psi(z)=[\psi_1(z)\ \psi_2(z)]$, $\psi_1(z)\in\clb(\cle,\clf)$, and $\psi_2(z)\in\clb(\C,\clf)$.
    Let $\psi_1(z)=[\psi^{(1)}_{ij}]$.
    Now from the first identity in ~\eqref{BergNonCNPId1}, we get
    \begin{align}\label{BergNonCNPId2}
        \sqrt{\frac{p(p-1)}{2}}z=\sqrt{p}\psi^{(1)}_{11}(\varphi(z))+\sqrt{\frac{p(p-1)(p-2)}{6}}z^2\psi^{(1)}_{21}(\varphi(z))+\cdots.
    \end{align}
    Taking $z=0$ in ~\eqref{BergNonCNPId2}, we get $\psi^{(1)}_{11}(0)=0$, and thus, for some $\tilde{\psi}_{11}\in H^\infty_1(\D)$, $\psi^{(1)}_{11}(z)=z\tilde{\psi}_{11}(z)$. Again, since $\varphi(0)=0$, $\varphi(z)=z\varphi_1(z)$ for some $\varphi_1\in H^\infty_1(\D)$. Now from ~\eqref{BergNonCNPId2}, we have
    \begin{align}\label{gene_id}
        \sqrt{\frac{p(p-1)}{2}}=\sqrt{p}\varphi_1(z)\tilde{\psi}_{11}(\varphi(z))+\sqrt{\frac{p(p-1)(p-2)}{6}}z\psi^{(1)}_{21}(\varphi(z))+\cdots.
    \end{align}
    Again taking $z=0$ in \eqref{gene_id}, we have 
    $$\sqrt{\frac{p-1}{2}}=\varphi_1(0)\tilde{\psi}_{11}(0).$$
    So, $$|\varphi_1(0)\tilde{\psi}_{11}(0)|=\sqrt{\frac{p-1}{2}}\geq1,$$
    which is a contradiction for all $p>3$. For $p=3$, $|\varphi_1(0)\tilde{\psi}_{11}(0)|=1$ implies that $\varphi_1(z)=\gamma$ $(z\in\D)$ for some $\gamma\in \mathbb{T}$. Thus $\varphi(z)=\gamma z$ for all $z\in\D$. This is a contradiction as $B_{1}^{\varphi}(z,w)=\frac{1}{(1-z\bar{w})^2}$ is not a CNP kernel. This completes the proof. 
\end{proof}

It has been shown in ~\cite[Theorem A]{Zhu} that if $\varphi\in H^{\infty}_1(\D)$ is a finite Blaschke product then the de Branges-Rovnyak space $\clh(B^{\varphi})$, which is also known as sub-Bergman space associated to $\varphi$, is same as $H^2(\D)$ but with a possibly different norm. However, from Corollary ~\ref{BergCNP}, we conclude that for a finite Blaschke product $\varphi$, $\clh(B^{\varphi})$ is isometrically isomorphic to $H^2(\D)$ as reproducing kernel Hilbert spaces if and only if $\varphi$ is a Blaschke factor. For a general $\varphi\in H^{\infty}_1(\D)$ or even when $\varphi$ is a general inner function, explicit description of the de Branges-Rovnyak space $\clh(B^{\varphi})$ is not known (see Section $6$ of ~\cite{Zhu} for a detailed discussion on this issue). However, by Corollary ~\ref{BergCNP}, we conclude that they are never CNP spaces.

\vspace{0.1in} \noindent\textbf{Conflict of interest:}
The author states that there is no conflict of interest. No data sets were generated or analyzed during the current study.

\vspace{0.1in} \noindent\textbf{Acknowledgement:}
The authors thank the referees for their valuable comments and suggestions, which helps to improve the article. The first author is supported by Prime Minister's Research Fellowship, ID: 1303115.
The second author is supported by the Mathematical Research Impact Centric Support (MATRICS) grant, File No: MTR/2021/000560, by the Science and Engineering Research Board (SERB), Department of Science \& Technology (DST), Government of India.

\bibliographystyle{amsplain}

\end{document}